\newif
\newcommand{\removelatexerror}{\let\@latex@error\@gobble}
\newcolumntype{P}[1]{>{\centering\arraybackslash}p{#1}}
\newtheorem{remark}{\bfseries Remark}
\definecolor{orange}{rgb}{1,0.5,0}
\renewcommand{\qed}{\hfill\blacksquare}
\DeclareMathOperator*{\esssup}{ess\,sup}
\definecolor{color_yuqi}{RGB}{0, 0, 255}
\definecolor{color_yuqi2}{RGB}{234, 135, 47}
\begin{document}

\title{Distributionally Robust Chance-Constrained Optimal Transmission Switching for Renewable Integration}
\author{
\IEEEauthorblockN{Yuqi Zhou},~\IEEEmembership{Student Member, IEEE}, \IEEEauthorblockN{Hao Zhu},~\IEEEmembership{Senior Member, IEEE}, and \IEEEauthorblockN{Grani A. Hanasusanto}

\thanks{\protect\rule{0pt}{3mm} Manuscript received February 14, 2022; revised July 14, 2022; and accepted August 12, 2022. This work has been supported by NSF Grants 1802319 and 1752125.}

\thanks{\protect\rule{0pt}{3mm} Y. Zhou and H. Zhu are with the Department of Electrical \& Computer Engineering, The University of Texas at Austin, Austin, TX 78712, USA. G. A. Hanasusanto is with the Graduate Program in Operations Research \& Industrial Engineering, The University of Texas at Austin, Austin, TX 78712, USA; Emails: {\{zhouyuqi, haozhu, grani.hanasusanto\}{@}utexas.edu}.
}
}
\markboth{IEEE TRANSACTIONS ON SUSTAINABLE ENERGY (ACCEPTED)}%
{Zhou \MakeLowercase{\textit{et al.}}: Distributionally Robust Chance-Constrained Optimal Transmission Switching for Renewable Integration}
\renewcommand{\thepage}{}
\maketitle
\pagenumbering{arabic}

\begin{abstract}
Increasing integration of renewable generation poses significant challenges to ensure robustness guarantees in real-time energy system decision-making. This work aims to develop a robust optimal transmission switching (OTS) framework that can effectively relieve grid congestion and mitigate renewable curtailment. We formulate a two-stage distributionally robust chance-constrained (DRCC) problem that assures limited constraint violations for any uncertainty distribution within an ambiguity set. Here, the second-stage recourse variables are represented as linear functions of uncertainty, yielding an equivalent reformulation involving linear constraints only. We utilize  moment-based (mean-mean absolute deviation) and distance-based ($\infty$-Wasserstein distance) ambiguity sets that lead to scalable mixed-integer linear program (MILP) formulations. Numerical experiments on the IEEE 14-bus and 118-bus systems have demonstrated the performance improvements of the proposed DRCC-OTS approaches in terms of guaranteed constraint violations and reduced renewable curtailment. In particular, the computational efficiency of the moment-based MILP approach, which is scenario-free with fixed problem dimensions, has been confirmed, making it suitable for real-time grid operations.

\end{abstract}

\begin{IEEEkeywords}
Chance constraint, distributionally robust, optimal transmission switching, renewable generation.
\end{IEEEkeywords}



\section{Introduction}\label{sec:intro}

{R}{ising} renewable penetration in recent years greatly challenges the efficient and reliable operations of power systems. With the increasing  uncertainty from renewables, robust decision-making, such as grid topology optimization \cite{fisher2008optimal}, is of great importance. 
Judicious line switching along with generation dispatch can potentially reduce generation costs and renewable curtailment level, yet at possible violations of operational limits (e.g., line power flows).  Thus, it is imperative to design optimal transmission switching (OTS) algorithms that ensure guaranteed robustness under the uncertain renewables.

The OTS problem has attracted high interest in recent years in its algorithm design and practical implementations (e.g., \cite{hedman2008optimal,kocuk2017new,ergun2012transmission,wang2018capacity}).
The switching of transmission lines expands the feasible region for generation dispatch decisions and thus relieves grid congestion.
Therefore, it can potentially adapt to the varying power transfer needed for  renewable generation and reduce the level of renewable curtailment. For example, over 7000 MW of wind capacity was installed in  Texas from 2006 to 2009, but major transmission congestion was experienced~\cite{EIA2}. Transmission constraints have resulted in  excessive wind curtailment, as reported in \cite{NREL,CAISO2}. Thus,  efficiently solving OTS is of great importance to enhance the penetration level of renewable energy to the grid. 

As the OTS problem includes additional integer decision variables, extending it to a stochastic/robust optimization framework is more difficult than that for optimal power flow (OPF). Similar to OPF, scenario-based approaches have been largely used to deal with the OTS problem under uncertainty. For example, stochastic topology optimization has been considered in  \cite{shi2016wind,dehghanian2016probabilistic} based on wind uncertainty scenarios from known probabilistic models. 
In addition, the chance-constrained (CC) framework has been developed for the OTS problem in \cite{qiu2015chance}, aiming to attain guaranteed constraint violation for a given uncertainty distribution using the sample-average approximation (SAA) approach. Nonetheless, constructing an accurate distribution for the uncertainty in energy resources can be extremely challenging in practice. Moreover, these approaches typically lead to a mixed-integer program (MIP) in which the problem dimensions quickly grow with the number of samples. This scalability issue results in high computational complexity and makes these scenario-based approaches sub-par for real-time OTS decision-making.

To tackle the scalability issue with scenario-based approaches, some robust/stochastic OTS work \cite{korad2013robust,dehghan2015robust,lan2021stochastic} invokes a repeating procedure of adding cuts to a master problem using sub-problem solutions. Nonetheless, their computational efficiency can still be problematic while the optimality guarantee is unclear. 
Instead, our earlier work \cite{zhou2020transmission} has proposed a robust OTS algorithm by using the linear decision rule (LDR) technique to approximate  second-stage variables, seeking to maintain the operating limits for any uncertainty within a compact set. Even though LDR constitutes merely a linear approximation, the resultant mixed-integer linear program (MILP) has a fixed problem dimension and is efficient to solve.
Nonetheless, its robustness under all possible uncertainty scenarios makes the solution unnecessarily conservative. 
In addition, recent work \cite{nazemi2019mixed} has considered the distributionally robust chance-constrained (DRCC) OTS problem to account for the ambiguity of uncertainty distribution. Nonetheless, the linearized OTS model therein builds upon line outage sensitivity factors and cannot accurately include multiple, simultaneous topology changes.

Our work aims to provide computationally efficient algorithms for solving the DRCC-OTS problem by developing equivalent reformulations. Notably, we consider an \textit{equivalent} linear reformulation for the integer line status and dc power flow variables. We analytically establish that for the two-stage OTS under linear generation response, the recourse actions (line flows and phase angles) can be represented as linear functions of the uncertainty variables. This linear OTS model is crucial for tractable DRCC reformulation through  dualization. 
Compared to CC-OTS, the proposed DRCC framework seeks  dispatch and switching decisions that are robust against the worst-case uncertainty distribution from within a prescribed ambiguity set. Thus, it greatly expands the possible probabilistic models, where variations of uncertainty distributions are common in real-world settings due to the lack of data samples or high variability. 
The DRCC approaches are of particular importance for enhancing renewable integration because they can provide guaranteed robustness performance as demonstrated by our numerical tests.
To provide tractable DRCC-OTS solutions, this work considers moment-based (mean-mean absolute deviation) and distance-based (Wasserstein distance) ambiguity sets for the renewable uncertainty, both of which are amenable to linear reformulations.

The contribution of our work is three-fold.
\begin{itemize}
\item We put forth a two-stage DRCC-OTS problem under renewable uncertainty that models real-time linear adjustment of generation output.
\item For the proposed two-stage OTS problem, we analytically establish an equivalent LDR-based reformulation by recognizing that recourse actions are exactly linear in the uncertainty for given first-stage decision variables.  
\item We are the first to construct scalable DRCC-OTS problems using the mean dispersion and the ${\infty}$-Wasserstein ambiguity sets, both leading to MILP reformulations through dualization-based analysis. 
\end{itemize}
{Numerical tests demonstrate the proposed DRCC-OTS solutions can effectively limit the constraint violations and reduce curtailment under renewable uncertainty, greatly improving the robustness guarantees over CC-OTS.} Furthermore, the moment-based DRCC-OTS approach is scenario-free and efficiently solvable, hence very suitable for real-time grid operations.




The rest of the paper is organized as follows. Section \ref{sec:ps} formulates the OTS problem based on the dc power flow model. 
Section \ref{sec:ccots} introduces the linear equivalent reformulation of the two-stage robust OTS problem. 
For comparison purposes, two benchmark CC-OTS approaches with conic reformulations are considered and a linear program is further presented for quantifying the benefits of renewable curtailment for each given approach. 
Section \ref{sec:drcc} presents the DRCC-OTS algorithms using both moment-based and distance-based ambiguity sets. 
Numerical experiments using the IEEE 14-bus and 118-bus systems are presented in Section \ref{sec:num} to demonstrate the improvements of the proposed DRCC-OTS algorithms in terms of guaranteed robustness and computational efficiency.
The paper is wrapped up in Section \ref{sec:con}.

\textit{Notation:} 
Bold symbols stand for matrices/vector and unbolded symbols stand for scalars; $(\cdot)^{\mathsf T}$ stands for transposition; $|\, \cdot \,|$ denotes the absolute value; $\left\| \, \cdot \, \right\|$ denotes the vector norm; $\circ$ denotes the Hadamard product; $\mathbf{e}$ denotes the vector of all ones; $\bbe_i$ denotes the standard basis vector with all entries being 0 except for the $i$-th entry equals to 1; $\mathbbm{1}$ denotes the indicator function; $\mathcal{M}_{+}$ denotes the set of nonnegative measures.

\section{System Modeling}\label{sec:ps}

We first  present the dc power flow based optimal transmission switching (OTS) formulation \cite{fisher2008optimal}. Consider a transmission system with $N$ buses collected in the set $\cal N :=$ $\{1,\ldots,N\}$ and $L$ lines in $\cal L :=$ $\{(i,j)\} \subset \cal N \times \cal N$.
Let $\theta_i$ denote the voltage angle per bus $i$ and the vector $\bm{\theta} \in \mathbb{R}^{N}$ collect all $\theta_i$'s. Similarly, let $\bm{g},~\bm{d} \in \mathbb{R}^{N}$ denote the vectors of nodal generation and load, respectively. The line flow $\bm{f}:=[\{f_{ij}\}] \in \mathbb{R}^{L}$ becomes 
\begin{align}
\bm{f} = \mathbf{K}\bm{\theta} \label{eq:PF1}
\end{align}
where the matrix $\mathbf{K} \in \mathbb R^{L\times N}$ is formed by the topology and line parameters. Specifically, its row for line $(i,j)$ equals to $b_{ij} (\mathbf{e}_i-\mathbf{e}_j)^\mathsf T$, with $b_{ij}$ being the inverse of line reactance and $\mathbf{e}_i$ the $i$-th standard basis vector. Furthermore, the nodal power balance leads to the total injection $\bm{p} := \bm{g} - \bm{d}$ as:
\begin{align}
\bm{p} = \mathbf{A}\bm{f} \label{eq:PF2}
\end{align}
where $\mathbf{A} \in \mathbb{Z}^{N \times L}$ corresponds to the graph incidence matrix for $(\cal N, \cal L)$, with the column for line $(i,j)$ set to $(\mathbf{e}_i-\mathbf{e}_j)$.

The OTS problem aims to determine the connectivity of transmission lines so as to minimize the total generation cost for a given load $\bm{d}$. For simplicity, we consider a linear generation cost (as in \cite{fisher2008optimal,hedman2008optimal,heidarifar2015network}) and use $\bm{c} \in \mathbb{R}^{N}$ to denote the vector of (known) linear cost coefficients. In addition to the dispatch $\bm g$, the OTS's decision variables include a binary vector $\bm{z} \in \mathbb{R}^{L}$ to indicate the transmission line status (1: closed, 0: open). The OTS problem is formulated as a mixed-integer linear program (MILP), given by
\begin{subequations} \label{eq:TS}
\begin{align}
\min \quad & {\bm{c}^{\mathsf T}\bm{g}}\\
\textrm{s.t.} \quad &  \bm{g} \in \mathbb{R}^{N}, \bm{\theta} \in \mathbb{R}^{N}, \bm{f} \in \mathbb{R}^{L}, \bm{z} \in \mathbb{Z}^{L}\label{eq:OTS_b}\\ 
  & \underline{\bm{g}} \leq \bm{g} \leq \overline{\bm{g}}    \label{eq:OTS_c}\\
  & \underline{\boldsymbol{\theta}} \leq \boldsymbol{\theta} \leq \overline{\boldsymbol{\theta}} \label{eq:OTS_d}\\
  & \underline{\bm{f}} \circ \bm{z} \leq \bm{f} \leq \overline{\bm{f}} \circ \bm{z} \label{eq:OTS_e}\\
  & \mathbf{A}\bm{f} = \bm{g} - \bm{d}\label{eq:OTS_f}\\
  & \mathbf{K}\boldsymbol{\theta}  - \bm{f} + {\mathbf{M}} \circ (\mathbf{e} - \bm{z}) \geq {\bm{0}}\label{eq:OTS_g}\\
  & \mathbf{K}\boldsymbol{\theta}  - \bm{f} - {\mathbf{M}} \circ (\mathbf{e} - \bm{z}) \leq {\bm{0}}\label{eq:OTS_h}\\
  & \mathbf{e}^{\mathsf T}\bm{z} \geq L - L_o \label{eq:OTS_i}
\end{align}
\end{subequations}
where constraints on generation, angle and line flow in \eqref{eq:OTS_c}-\eqref{eq:OTS_e} enforce the system operating limits. We use $\circ$ to denote the component-wise product (or Hadamard product), which is used in \eqref{eq:OTS_e} to enforce the limits on closed lines only according to $\bm{z}$. 
For any open line ($z_{ij}=0$), its flow $f_{ij}$ becomes zero under \eqref{eq:OTS_e}. Additionally,  the constraint \eqref{eq:OTS_f} enforces network power balance as in \eqref{eq:PF2}. As for constraints \eqref{eq:OTS_g} and \eqref{eq:OTS_h}, they jointly represent the line flow model in \eqref{eq:PF1} where the vector ${\mathbf{M}}$ has each entry $\mathrm{M}_{ij}$ for each line $(i,j)$ to be a sufficiently large constant.
For any closed line ($z_{ij}=1$), the two inequalities exactly lead to the equality constraint as in \eqref{eq:PF1}. Otherwise, under $z_{ij}=0$ and thus $f_{ij}=0$ [cf. \eqref{eq:OTS_e}], the two constraints respectively become $b_{ij}(\theta_{i} - \theta_{j}) + \mathrm{M}_{ij} \geq 0$ and $b_{ij}(\theta_{i} - \theta_{j}) - \mathrm{M}_{ij} \leq 0$. Accordingly, both conditions trivially hold under a large enough  $\mathrm{M}_{ij}$ and does not affect the OTS problem. This technique is known as the \textit{Big-M} method~\cite{griva2009linear}, which is powerful for handling constraints with binary variables.
For each line $(i,j)$, we can set $\mathrm{M}_{ij} \coloneqq b_{ij} \Delta \theta_{ij}^{\max}$ with a maximum limit $\Delta \theta_{ij}^{\max}$ according to angle stability [cf. \eqref{eq:OTS_d}]. Lastly, for system stability concerns, we impose the constraint \eqref{eq:OTS_i} to restrict the total number of lines that can be switched off not to exceed the given limit $L_o$. In fact, this restriction can also reduce the computational complexity of solving the resultant MILP. In addition, earlier studies (see e.g., \cite{fisher2008optimal,qiu2015chance,heidarifar2015network}) have shown that the incremental reduction of total generation cost diminishes rapidly when $L_{o}$ reaches a certain level. Practical choices of $L_o$ are relatively small (e.g., $L_o\leq 4$) for large systems.

\begin{remark}[{power flow modeling}] This paper adopts the dc power flow model for formulating the OTS problem. Albeit simple, it does not include voltage limits or other ac flow considerations. To address this, it is possible to extend to the ac power flow by using the relaxation-based formulation in~\cite{kocuk2017new}. In addition, one can perform the post-selection ac flow analysis and verify the ac feasibility of the resultant solution to \eqref{eq:TS}, as introduced in \cite{goldis2015ac}.
\end{remark}

While the dc-OTS solutions may not always be ac feasible as pointed out by earlier papers (e.g., \cite{stott2009dc,shchetinin2018construction,crozier2022feasible}), there exist some corrective measures to attain ac-feasiblility; see e.g., \cite{barrows2014correcting} and references therein. For example, one can try to remove one single line from the dc-optimal solution of switched lines in order to maintain the satisfaction of constraints. The selection of the line removal could depend on the reactance/resistance criteria as proposed in \cite{barrows2014correcting}. This screening process could be repeated until an ac-feasible solution has been obtained. While these solutions do not exactly guarantee ac-feasibility, they turn out to be very effective in practice \cite{barrows2014correcting,bai2016two,coffrin2014primal}.

\section{OTS Under Uncertainty}
\label{sec:ccots}
This section formally presents the OTS problem under uncertainty as well as its chance-constrained solutions.
We first discuss the model of uncertainty due to e.g., renewable generation or flexible demand. Let $\bbxi\in \mathbb R^K$ stand for the  uncertainty vector of the full system  with its samples  denoted by $\{{\boldsymbol{\xi}}^{j}\}_{j=1}^S$. 
{We assume that $\bm\xi$ is bounded with a certain support set. In a data-driven setting, the set can be estimated  with high confidence from the samples under mild assumptions on the distribution (e.g., sub-Gaussian). For example, it can be the polytope formed by  the convex hull of the samples \cite{gazijahani2017robust}. The following general condition is assumed.}
\begin{assumption}\label{as:1}
The support set for $\bbxi$ is compact and represented by a full-dimensional polytope $\boldsymbol\Xi \coloneqq \{\boldsymbol{\xi} \in \mathbb{R}^{K}: \mathbf{U}\boldsymbol{\xi} \leq \mathbf{t}\}$.
\end{assumption}
To incorporate the uncertainty into \eqref{eq:TS}, we resort to a two-stage robust optimization 
by making a \emph{here-and-now} decision while taking \emph{recourse} or \emph{wait-and-see} actions once the realizations of $\boldsymbol{\xi}$ are observed. 
Recourse functions are defined for the generation, angle, and line flow variables upon observing $\bbxi$.  
For simplicity,  we consider a linear response modeling for generator recourse actions, as motivated by frequency response and automatic generation control mechanisms \cite[Ch.~9]{wood2013power}. 
\begin{assumption}\label{as:2}
The generation recourse actions follow a linear response mechanism that adjusts each dispatchable generator by a fixed percentage of instantaneous network-wide power imbalance. As the latter is equal to  $\mathbf{e}^{\mathsf T} \boldsymbol{\xi}=\xi_1+\ldots+\xi_K$, the generation adjustment  becomes ${\bm{g}}'(\boldsymbol{\xi}) = \boldsymbol{\gamma} (\mathbf{e}^{\mathsf T} \boldsymbol{\xi})$, with vector $\boldsymbol{\gamma} \in \mathbb R^N$  collecting the linear coefficients to be determined.
\end{assumption}

This linear policy has been widely adopted by various earlier work (e.g., \cite{bienstock2014chance,roald2017chance,lorca2016multistage}), as it can quickly restore the system-wide power imbalance.
Specifically, the recourse actions are linear functions of total power mismatch, which can be quickly corrected by a proportional change from each generating unit. Such policy is very convenient to implement in practice as system-wide power mismatch is easily measured using frequency deviation. As a result, area-wide frequency responses require minimal communication overhead.

The flexibility of generation output is  limited by the committed reserves, with $\overline{\bm{r}},\underline{\bm{r}}\in \mathbb R^N$ denoting its upper/lower limits.
Moreover, changes of angles and line flows are respectively denoted by ${\boldsymbol{\theta}}'(\boldsymbol{\xi}):\mathbb R^K\rightarrow \mathbb R^N$ and ${\bm{f}}'(\boldsymbol{\xi}):\mathbb R^K\rightarrow \mathbb R^L$, both as recourse functions of $\boldsymbol{\xi}$. Inspired by the linearity of dc power flow, we will model them as linear functions, i.e., we have $\boldsymbol{\theta}'(\boldsymbol{\xi})=\bm{Y}_{\theta}{\boldsymbol\xi}$ and ${\bm{f}}'(\boldsymbol{\xi})= \bm{Y}_{f}{\boldsymbol\xi}$ with matrices ${\bm{Y}}_{{\theta}} \in \mathbb{R}^{N \times K}$ and $\bm{Y}_{f} \in \mathbb{R}^{L \times K}$ as decision variables. This approach is well known as the linear decision rule (LDR) scheme in two-stage robust optimization \cite{kuhn2011primal}, which approximates the recourse variables as affine functions of uncertainty. Interestingly, under (AS\ref{as:1})-(AS\ref{as:2}) this linearized model turns out to be \textit{exact} in representing the actual changes of angles and line flows at no modeling error, as detailed shortly. With vector $\bm{q}$ collecting the linear cost coefficients for generation adjustment, the OTS problem under uncertain $\boldsymbol{\xi}$ is cast as:
\begin{subequations}\label{eq:LDR_2}
\begin{align}
\min \quad & \bm{c}^{\mathsf T}\bm{g} + \mathbb{E}[\bm{q}^{\mathsf T}\boldsymbol{\gamma} \mathbf{e}^{\mathsf T} \boldsymbol{\xi}] \label{eq:LDR_a2}\\
\textrm{s.t.} \quad & \eqref{eq:OTS_b} - \eqref{eq:OTS_i},\; \boldsymbol{\gamma} \in [0,1]^{N},\;\mathbf{e}^{\mathsf T} \boldsymbol{\gamma} = 1 \label{eq:LDR_b2}\\
  & {\bm{Y}}_{{\theta}} \in \mathbb{R}^{N \times K},\; \bm{Y}_{f} \in \mathbb{R}^{L \times K} \label{eq:LDR_c2}\\
  & \underline{\bm{r}}  \leq \boldsymbol{\gamma} \mathbf{e}^{\mathsf T} \boldsymbol{\xi} \leq \overline{\bm{r}}\;\; \forall {\boldsymbol\xi} \in  \boldsymbol{\Xi}\label{eq:LDR_d2}\\
  & \underline{\bm{g}} \leq \bm{g} + \boldsymbol{\gamma} \mathbf{e}^{\mathsf T} \boldsymbol{\xi} \leq \overline{\bm{g}} \;\; \forall {\boldsymbol\xi} \in  \boldsymbol{\Xi}\label{eq:LDR_e2}\\
  & \underline{\boldsymbol{\theta}} \leq \boldsymbol{\theta} + \bm{Y}_{\theta}{\boldsymbol\xi} \leq \overline{\boldsymbol{\theta}} \;\; \forall {\boldsymbol\xi} \in  \boldsymbol{\Xi}\label{eq:LDR_f2}\\
  & \underline{\bm{f}} \circ \bm{z} \leq \bm{f} + \bm{Y}_{f}{\boldsymbol\xi}  \leq \overline{\bm{f}} \circ \bm{z} \;\; \forall {\boldsymbol\xi} \in  \boldsymbol{\Xi}    \label{eq:LDR_g2}\\
  & \mathbf{A}({\bm{f} + \bm{Y}_{f}{\boldsymbol\xi}} ) = \bm{g} + \boldsymbol{\gamma}\mathbf{e}^{\mathsf T}{\boldsymbol\xi} - \bm{d} - \mathbf{F}{\boldsymbol\xi} \;\; \forall {\boldsymbol\xi} \in  \boldsymbol{\Xi}\label{eq:LDR_h2}\\
  & \mathbf{K}(\boldsymbol{\theta} + \bm{Y}_{\theta}{\boldsymbol\xi} ) - \bm{f} - \bm{Y}_{f}{\boldsymbol\xi} + {\mathbf{M}}\circ(\mathbf{e} - \bm{z}) \geq \bm{0}\;\; \forall {\boldsymbol\xi} \in  \boldsymbol{\Xi}\label{eq:LDR_i2} \\
  & \mathbf{K}(\boldsymbol{\theta} + \bm{Y}_{\theta}{\boldsymbol\xi}) - \bm{f} - \bm{Y}_{f}{\boldsymbol\xi} - {\mathbf{M}}\circ(\mathbf{e} - \bm{z}) \leq \bm{0}\;\; \forall {\boldsymbol\xi} \in  \boldsymbol{\Xi}.\label{eq:LDR_j2}
\end{align}
\end{subequations}
In the following, we define  $\mathbf{x}$ to be the vector of  decision variables comprising the first-stage decisions $(\bm{g},\bm{\theta},\bm{f},\bm{z},\boldsymbol{\gamma})$. Note that the coefficient $\bm{\gamma}$ is included for reducing the total cost. The second-stage decisions ${\bm{Y}}_{{\theta}}$ and $\bm{Y}_{f}$ relate the  angle and line flow adjustments to $\boldsymbol{\xi}$. 
Moreover, the transformation matrix $\mathbf{F} \in \mathbb{R}^{N \times K}$ in \eqref{eq:LDR_h2} is a known mapping  from $\boldsymbol{\xi} \in \mathbb{R}^{K}$ to the system dimension $N$. Basically, problem \eqref{eq:LDR_2} aims to minimize the sum of total generation cost at the first-stage and the expected cost during real-time \textit{recourse} adjustment. Constraint \eqref{eq:LDR_d2} imposes the operating reserve limits while the remaining ones \eqref{eq:LDR_e2}-\eqref{eq:LDR_j2} ensure that the system operating limits  in the OTS problem \eqref{eq:TS} would still hold after recourse actions are taken [cf. \eqref{eq:OTS_c}-\eqref{eq:OTS_h}]. With the mean of uncertainty $\mathbb{E}[\boldsymbol{\xi}] = \boldsymbol \mu$ known,  the term $\mathbb{E}[\bm{q}^{\mathsf T}\boldsymbol{\gamma} \mathbf{e}^{\mathsf T} \boldsymbol{\xi}]$ in \eqref{eq:LDR_a2} simplifies to $\bm{q}^{\mathsf T}\boldsymbol{\gamma} \mathbf{e}^{\mathsf T} \boldsymbol{\mu}$, which is a linear function of the unknown $\bbgamma$.

By recognizing this linearity, we can establish the exactness of the modeling on $\boldsymbol{\theta}'(\boldsymbol{\xi})=\bm{Y}_{\theta}{\boldsymbol\xi}$ and ${\bm{f}}'(\boldsymbol{\xi})= \bm{Y}_{f}{\boldsymbol\xi}$, as follows. 
\begin{lemma}
Under (AS\ref{as:1})-(AS\ref{as:2}),  the adjustments on angle and line flow for fixed grid topology $\bm{z}$ and coefficients $\bbgamma$ become exactly linear functions of  $\boldsymbol \xi$, with both matrices $\bm{Y}_\theta$ and $\bm{Y}_{f}$ uniquely determined by $\bm{z}$ and $\boldsymbol{\gamma}$. \label{eq:theorem}
\end{lemma}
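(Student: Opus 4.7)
The plan is to argue that once the first-stage decisions $(\bm g, \bm\theta, \bm f, \bm z, \boldsymbol\gamma)$ are fixed, the recourse constraints in \eqref{eq:LDR_e2}--\eqref{eq:LDR_j2} reduce, for each realization $\boldsymbol\xi \in \boldsymbol\Xi$, to a linear system in the recourse variables whose right-hand side is linear in $\boldsymbol\xi$. The first step is to substitute the generator response $\bm g'(\boldsymbol\xi) = \boldsymbol\gamma \mathbf e^{\mathsf T}\boldsymbol\xi$ from (AS\ref{as:2}) into the power-balance equation \eqref{eq:LDR_h2} and subtract the first-stage balance $\mathbf A \bm f = \bm g - \bm d$. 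This yields the identity $\mathbf A\, \bm f'(\boldsymbol\xi) = (\boldsymbol\gamma \mathbf e^{\mathsf T} - \mathbf F)\boldsymbol\xi$, which is already explicitly linear in $\boldsymbol\xi$.

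Next I would examine the constraints driven by the binary status vector $\bm z$. On every open line ($z_{ij}=0$), the bounds in \eqref{eq:LDR_g2} force $f_{ij} + (\bm Y_f)_{ij,:}\boldsymbol\xi = 0$ for all $\boldsymbol\xi \in \boldsymbol\Xi$; combined with the first-stage fact $f_{ij}=0$ and the full-dimensionality of $\boldsymbol\Xi$ from (AS\ref{as:1}), this pins the corresponding row of $\bm Y_f$ to zero, so the flow recourse on open lines vanishes identically. On every closed line ($z_{ij}=1$), the Big-M constraints \eqref{eq:LDR_i2}--\eqref{eq:LDR_j2} collapse into the equality $\mathbf K(\bm\theta + \bm Y_\theta \boldsymbol\xi) = \bm f + \bm Y_f \boldsymbol\xi$; subtracting the first-stage relation $\mathbf K \bm\theta = \bm f$ on closed-line rows leaves the dc flow identity $\mathbf K_c \bm Y_\theta = (\bm Y_f)_c$, where the subscript $c$ restricts to closed-line rows.

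Combining these pieces yields the reduced dc power-flow system on the post-switching topology: the closed-line flow recourses equal $\mathbf K_c \bm Y_\theta \boldsymbol\xi$, the open-line recourses vanish, and nodal balance reads $\mathbf A_c \mathbf K_c \bm Y_\theta = \boldsymbol\gamma \mathbf e^{\mathsf T} - \mathbf F$ as a matrix identity (again by the full-dimensionality of $\boldsymbol\Xi$). The coefficient $\mathbf A_c \mathbf K_c$ is the weighted bus-susceptance (Laplacian) matrix of the closed subgraph; assuming the switched network remains connected, as enforced in practice by \eqref{eq:OTS_i}, its null space is spanned by $\mathbf e$. After eliminating the angle gauge by fixing a reference bus, the system admits a unique solution of the form $\bm Y_\theta = \hat{\mathbf B}^{-1}(\boldsymbol\gamma \mathbf e^{\mathsf T} - \mathbf F)$ with $\hat{\mathbf B}$ the reduced bus-susceptance matrix; correspondingly $\bm Y_f$ is uniquely pinned down as $\mathbf K_c \bm Y_\theta$ on closed rows and zero on open rows. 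Both matrices depend on the first-stage only through $\bm z$ (which selects the active lines and columns of $\hat{\mathbf B}$) and $\boldsymbol\gamma$ (which appears in the right-hand side), exactly as claimed.

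The main obstacle I anticipate is the uniqueness part, since angles are inherently defined only up to an additive reference, and the claim should also cover the degenerate case in which the post-switching graph fragments into several connected components. I would handle both by selecting a reference bus on each component and observing that consistency of the reduced system on each component is automatic: summing $\mathbf A \bm f'(\boldsymbol\xi) = (\boldsymbol\gamma \mathbf e^{\mathsf T} - \mathbf F)\boldsymbol\xi$ against $\mathbf e$ uses $\mathbf e^{\mathsf T}\boldsymbol\gamma = 1$ from \eqref{eq:LDR_b2} together with the implicit balance property of $\mathbf F$, so that each component admits a unique solution in the sense of angles modulo its reference. This establishes the lemma.
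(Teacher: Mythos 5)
Your argument is correct and follows essentially the same route as the paper's proof: the paper likewise reduces the recourse to the injection change $(\boldsymbol{\gamma}\mathbf{e}^{\mathsf T}-\mathbf{F})\boldsymbol{\xi}$, invokes the dc linear flow model for a fixed non-islanding topology to conclude that $\boldsymbol{\theta}'$ and $\bm{f}'$ are linear in that change, and uses the full-dimensionality of $\boldsymbol{\Xi}$ for uniqueness --- you merely unpack the cited dc-flow fact into the explicit open/closed-line case analysis and reduced-Laplacian inversion. One small caution on your final remark: per-component power balance under islanding is \emph{not} automatic from $\mathbf{e}^{\mathsf T}\boldsymbol{\gamma}=1$ (only the network-wide balance is guaranteed), which is precisely why the paper restricts to topologies with no islanding.
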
 
\begin{proof}
Under (AS\ref{as:2}), the change of full network injection due to uncertainty is $(\boldsymbol{\gamma} \mathbf{e}^{\mathsf T} - \mathbf{F}){\boldsymbol\xi}$ [cf. \eqref{eq:LDR_h2}].
Under the fixed topology of no islanding, the dc linear flow model \cite[Ch. 4]{wood2013power} states that changes of angle and line flow, namely $\bbtheta'$ and ${\bm{f}}'$, are linearly related to the change of injection, and thus to $\boldsymbol{\xi}$ as well. 
Lastly, the full-dimensionality of the support set $\bbXi$ in  (AS\ref{as:1}) further guarantees the uniqueness of $\bm{Y}_\theta$ and $\bm{Y}_{f}$. 
$\qed$
\end{proof}

Lemma \ref{eq:theorem} ensures the linear models in problem \eqref{eq:LDR_2} produce the exact recourse values for angle and line flow under given first-stage decision variables of $\bm{z}$ and $\bbgamma$. 
Hence, our LDR approach yields an exact model for the recourse variables, and problem \eqref{eq:LDR_2} constitutes an \textit{equivalent} two-stage OTS formulation. This is a much stronger result than existing  LDR solutions \cite{kuhn2011primal}, including the earlier OTS application in \cite{nazemi2019mixed}.

\subsection{Chance-Constrained (CC-) OTS}
The chance-constrained (CC) formulation is popularly employed to deal with inequality constraints under uncertainty~\cite{li2021coordinating}. 
It ensures that constraints are satisfied with probability above a prescribed threshold.
The relevant constraints from problem~\eqref{eq:LDR_2} can be collected in the following set:
\begin{align} \label{eq:set}
   \ccalI =  \Big\{ & \underline{\bm{r}}  \leq \boldsymbol{\gamma} \mathbf{e}^{\mathsf T} \boldsymbol{\xi} \leq \overline{\bm{r}}, \nonumber\\
  & \underline{\bm{g}} \leq \bm{g} + \boldsymbol{\gamma} \mathbf{e}^{\mathsf T} \boldsymbol{\xi} \leq \overline{\bm{g}},\nonumber\\
  & \underline{\boldsymbol{\theta}} \leq \boldsymbol{\theta} + \bm{Y}_{\theta}{\boldsymbol\xi} \leq \overline{\boldsymbol{\theta}},\nonumber\\
  &  \underline{\bm{f}} \circ \bm{z} \leq \bm{f} + \bm{Y}_{f}{\boldsymbol\xi}  \leq \overline{\bm{f}} \circ \bm{z}  \Big\}.
\end{align}
These constraints correspond to the limits on reserve, generation, phase angle, and line flow as in \eqref{eq:LDR_d2} - \eqref{eq:LDR_g2}, all of which are linear in $\bbxi$. Note that the network power balance in \eqref{eq:LDR_h2} and line flow relations in  \eqref{eq:LDR_i2} - \eqref{eq:LDR_j2} are not part of the set \eqref{eq:set}. This is because they are used to determine the power flow  and thus need to be satisfied strictly. Interestingly, they can be effectively reformulated by linear constraints without $\bbxi$. 
For the semi-infinite equality constraint \eqref{eq:LDR_h2}, it reduces to a finite linear one, as stated in the following proposition. 
\begin{proposition} \label{eq:prop1}
Under (AS\ref{as:1}), constraint \eqref{eq:LDR_h2} is equivalent to:
\begin{align}
\mathbf{A} \bm{Y}_{f} = \boldsymbol{\gamma}\mathbf{e}^{\mathsf T}-\mathbf{F}. \label{eq:equlity_1}
\end{align}
\end{proposition}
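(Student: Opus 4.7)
The plan is to observe that \eqref{eq:LDR_h2} is affine in $\boldsymbol{\xi}$, so that after separating the constant and linear parts the semi-infinite requirement collapses to two matrix equations, one of which is already captured by the first-stage nodal balance \eqref{eq:OTS_f}.

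First I would rewrite \eqref{eq:LDR_h2} by collecting terms into the single affine-in-$\boldsymbol{\xi}$ identity
\begin{equation*}
(\mathbf{A}\bm{f}-\bm{g}+\bm{d}) \;+\; (\mathbf{A}\bm{Y}_{f}-\boldsymbol{\gamma}\mathbf{e}^{\mathsf T}+\mathbf{F})\,\boldsymbol{\xi} \;=\; \bm{0} \qquad \forall\,\boldsymbol{\xi}\in\boldsymbol{\Xi}.
\end{equation*}
Denote the constant vector by $\bm{c}$ and the coefficient matrix by $\mathbf{B}$, so the requirement is $\bm{c}+\mathbf{B}\boldsymbol{\xi}=\bm{0}$ for every $\boldsymbol{\xi}\in\boldsymbol{\Xi}$.

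Next I would invoke (AS\ref{as:1}): since $\boldsymbol{\Xi}$ is a full-dimensional polytope in $\mathbb{R}^{K}$, it contains $K+1$ affinely independent points $\boldsymbol{\xi}^{0},\ldots,\boldsymbol{\xi}^{K}$, and the differences $\boldsymbol{\xi}^{i}-\boldsymbol{\xi}^{0}$ span $\mathbb{R}^{K}$. Subtracting the identity at $\boldsymbol{\xi}^{0}$ from the identity at each $\boldsymbol{\xi}^{i}$ yields $\mathbf{B}(\boldsymbol{\xi}^{i}-\boldsymbol{\xi}^{0})=\bm{0}$ for $i=1,\ldots,K$, which forces $\mathbf{B}=\bm{0}$; substituting back then forces $\bm{c}=\bm{0}$. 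Conversely, if $\mathbf{B}=\bm{0}$ and $\bm{c}=\bm{0}$, the identity trivially holds on all of $\boldsymbol{\Xi}$. Thus \eqref{eq:LDR_h2} is equivalent to the pair of equations
\begin{equation*}
\mathbf{A}\bm{f}=\bm{g}-\bm{d}, \qquad \mathbf{A}\bm{Y}_{f}=\boldsymbol{\gamma}\mathbf{e}^{\mathsf T}-\mathbf{F}.
\end{equation*}

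Finally I would note that the first of these two equations is precisely \eqref{eq:OTS_f}, which is already imposed through \eqref{eq:LDR_b2} in problem \eqref{eq:LDR_2}. Consequently, in the context of \eqref{eq:LDR_2} the only new information carried by \eqref{eq:LDR_h2} is the matrix equation \eqref{eq:equlity_1}, establishing the claimed equivalence. There is no real obstacle here; the only point that deserves care is the use of full-dimensionality of $\boldsymbol{\Xi}$ to rule out that a nonzero affine function could vanish on $\boldsymbol{\Xi}$ (which would fail on a lower-dimensional support), and to note that the implicit presence of the nominal balance in the first-stage constraints is what lets the equivalence be stated with only \eqref{eq:equlity_1}.
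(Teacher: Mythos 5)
Your proposal is correct and follows essentially the same route as the paper: the paper likewise uses the first-stage balance $\mathbf{A}\bm{f}=\bm{g}-\bm{d}$ to strip the constant term and then argues that a linear map vanishing on the full-dimensional set $\boldsymbol{\Xi}$ must be the zero map, which is exactly your affinely-independent-points argument in slightly different clothing. Your version is marginally more careful in explicitly separating the constant and linear parts and in justifying why full-dimensionality forces both to vanish, but there is no substantive difference in approach.
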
 

\begin{proof}
Recalling $\mathbf{A}\bm{f} = \bm{g} - \bm{d}$ [cf. \eqref{eq:OTS_f}], we can rewrite \eqref{eq:LDR_h2} as $(\mathbf{A} \bm{Y}_{f} -\boldsymbol{\gamma}\mathbf{e}^{\mathsf T}+\mathbf{F})\boldsymbol{\xi} = \mathbf 0,\; \forall {\boldsymbol\xi} \in \boldsymbol{\Xi}$. This implies that the linear hull of $\boldsymbol{\Xi}$ should belong to the null space of the linear operator $(\mathbf{A} \bm{Y}_{f} - \boldsymbol{\gamma}\mathbf{e}^{\mathsf T} + \mathbf{F})$. As $\boldsymbol{\Xi}$ spans the whole sample space under (AS\ref{as:1}), the associated null space is empty and \eqref{eq:equlity_1} holds accordingly. 
$\qed$
\end{proof}
For the inequality constraints \eqref{eq:LDR_i2} and \eqref{eq:LDR_j2},  a well-known equivalence result in robust optimization \cite{kuhn2011primal} leads to a tractable constraint system, as described in the following proposition.
\begin{proposition} \label{eq:prop2}
Under (AS\ref{as:1}), the constraints \eqref{eq:LDR_i2} and \eqref{eq:LDR_j2} are respectively equivalent to:
\begin{subequations}  \label{eq:inequlity}
\begin{align}
   & \bm{Y}_{f} = \mathbf{K}\bm{Y}_{\theta} + \boldsymbol{\Phi}_{1}^{\mathsf T} \mathbf{U},\; \mathbf{K}\boldsymbol{\theta}  + {\mathbf{M}}\circ(\mathbf{e} - \bm{z})-\boldsymbol{\Phi}_{1}^{\mathsf T}\mathbf{t} \geq \bm{f} \label{eq:inequlity_1}\\
   & \bm{Y}_{f} = \mathbf{K}\bm{Y}_{\theta} - \boldsymbol{\Phi}_{2}^{\mathsf T}\mathbf{U},\;  {\mathbf{M}}\circ(\mathbf{e} - \bm{z})-\boldsymbol{\Phi}_{2}^{\mathsf T}\mathbf{t} + \bm{f} \geq \mathbf{K}\boldsymbol{\theta} \label{eq:inequlity_2}
\end{align}
\end{subequations}
where the matrices $\boldsymbol{\Phi}_{1}, \boldsymbol{\Phi}_{2}  \in \mathbb{R}_+^{L \times W}$ 
collect the dual variables for  constraints in \eqref{eq:LDR_i2} and \eqref{eq:LDR_j2}, respectively.
\end{proposition}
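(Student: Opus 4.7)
The plan is to apply the standard robust-counterpart reformulation from linear programming duality (e.g., the Kuhn--Wiesemann--Georghiou construction) to each of the semi-infinite linear inequalities \eqref{eq:LDR_i2} and \eqref{eq:LDR_j2}. These two constraints are structurally symmetric, so once I handle \eqref{eq:LDR_i2} the derivation of \eqref{eq:inequlity_2} is a mirror image with the sign of the $\bbxi$-dependent part flipped.

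For \eqref{eq:LDR_i2}, I would first rearrange it into the compact robust form
\begin{equation*}
(\bm{Y}_f - \mathbf{K}\bm{Y}_\theta)\,\bbxi \;\leq\; \mathbf{K}\bbtheta - \bm{f} + \mathbf{M}\circ(\mathbf{e}-\bm{z}), \qquad \forall\,\bbxi \in \boldsymbol{\Xi},
\end{equation*}
and then process it row by row. For the $\ell$-th row, letting $\mathbf{a}_\ell^{\mathsf T}$ denote the $\ell$-th row of $(\bm{Y}_f - \mathbf{K}\bm{Y}_\theta)$ and $b_\ell$ the $\ell$-th component of the right-hand side, the condition becomes the scalar semi-infinite inequality $\mathbf{a}_\ell^{\mathsf T}\bbxi \le b_\ell$ for all $\bbxi$ with $\mathbf{U}\bbxi \le \mathbf{t}$. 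Equivalently, $\max_{\bbxi \in \boldsymbol{\Xi}} \mathbf{a}_\ell^{\mathsf T}\bbxi \le b_\ell$, where the inner LP is feasible and has finite optimal value because $\boldsymbol{\Xi}$ is nonempty and compact by (AS\ref{as:1}). Invoking strong LP duality, the inner maximization is equivalent to the existence of a dual vector $\bbphi_\ell \in \mathbb{R}_+^{W}$ with $\mathbf{U}^{\mathsf T}\bbphi_\ell = \mathbf{a}_\ell$ and $\mathbf{t}^{\mathsf T}\bbphi_\ell \le b_\ell$.

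Next, I would stack the dual vectors $\bbphi_\ell$ as the $\ell$-th row (or column, matching the paper's convention) of a matrix $\boldsymbol{\Phi}_1 \in \mathbb{R}_+^{L\times W}$. The equalities $\mathbf{U}^{\mathsf T}\bbphi_\ell = \mathbf{a}_\ell$ aggregate into the matrix identity $\bm{Y}_f - \mathbf{K}\bm{Y}_\theta = \boldsymbol{\Phi}_1^{\mathsf T}\mathbf{U}$, while the inequalities $\mathbf{t}^{\mathsf T}\bbphi_\ell \le b_\ell$ aggregate into the componentwise bound $\boldsymbol{\Phi}_1^{\mathsf T}\mathbf{t} \le \mathbf{K}\bbtheta - \bm{f} + \mathbf{M}\circ(\mathbf{e}-\bm{z})$, recovering \eqref{eq:inequlity_1}. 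Repeating the same dualization verbatim for \eqref{eq:LDR_j2}, which can be written as $(\mathbf{K}\bm{Y}_\theta - \bm{Y}_f)\bbxi \le \mathbf{M}\circ(\mathbf{e}-\bm{z}) - \mathbf{K}\bbtheta + \bm{f}$, and introducing a second dual matrix $\boldsymbol{\Phi}_2 \in \mathbb{R}_+^{L\times W}$, produces \eqref{eq:inequlity_2}.

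The conceptually easy but bookkeeping-heavy part is confirming strong LP duality in both directions: the "if" direction (dual feasibility implies robust feasibility) follows from weak duality applied row-wise, and the "only if" direction requires strong duality, which is ensured because the primal LP is feasible and finite on the compact $\boldsymbol{\Xi}$. The main obstacle I anticipate is purely notational---keeping the transposes and row/column orientations of $\boldsymbol{\Phi}_1$ and $\boldsymbol{\Phi}_2$ consistent between the rowwise dual formulation and the aggregated matrix form---rather than anything substantive; the underlying duality argument is entirely standard once (AS\ref{as:1}) is in force.
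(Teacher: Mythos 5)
Your proposal is correct and follows essentially the same route as the paper: both rewrite each row of the semi-infinite constraint as an inner linear program over the polytope $\boldsymbol{\Xi}$ and apply strong LP duality (justified by the nonempty, compact support in (AS\ref{as:1})) to obtain the dual feasibility conditions $\mathbf{U}^{\mathsf T}\boldsymbol{\varphi}=\mathbf{a}_\ell$, $\mathbf{t}^{\mathsf T}\boldsymbol{\varphi}\le b_\ell$, which stack into \eqref{eq:inequlity}. The only difference is presentational---you dualize a row-wise maximization while the paper dualizes the equivalent minimization of $\bm{h}^{\mathsf T}\boldsymbol{\xi}+m$ and cites the adjustable-robust-counterpart theorem---so the two arguments coincide.
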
 
\begin{proof}
This proposition can be viewed as a special case of \cite[Thm.~3.2]{ben2004adjustable}. For any constraint of the form $\bm{h}^{\mathsf T} \boldsymbol{\xi} + m \geq 0, \forall \boldsymbol{\xi} \in \boldsymbol{\Xi}$, under (AS\ref{as:1}) it is equivalent to $0 \leq \min_{ \boldsymbol{\xi}} \{\bm{h}^{\mathsf T} \boldsymbol{\xi} + m: \mathbf{U}\boldsymbol{\xi} \leq \mathbf{t}\}$. The right-hand side expression is essentially a linear program, for which the equivalent dual problem under Slater's conditions becomes  $0 \leq \max_{\varphi} \{-\mathbf{t}^{\mathsf T}\boldsymbol\varphi + m: \boldsymbol\varphi  \geq \bm{0}, -\mathbf{U}^{\mathsf T} \boldsymbol\varphi = \bm{h}\}$, where $\boldsymbol\varphi$ is the vector of dual variables. 
For the maximum of the dual problem to be non-negative, the dual vector
$\boldsymbol\varphi  \geq \bm{0}$ has to satisfy $\mathbf{U}^{\mathsf T} \boldsymbol\varphi = -\bm{h}$ {and} ${\mathbf{t}}^{\mathsf T} \boldsymbol\varphi \leq m$. Thus, constraints \eqref{eq:LDR_i2} and \eqref{eq:LDR_j2} are rewritten into \eqref{eq:inequlity} using this equivalence. 
$\qed$
\end{proof}
Using Propositions \ref{eq:prop1} and \ref{eq:prop2}, we can convert the remaining constraints in \eqref{eq:LDR_2} to deterministic ones  without $\bbxi$ and accordingly formulate the CC-OTS problem as, follows:
\begin{subequations}\label{eq:LDR}
\begin{align}
\min \quad & \bm{c}^{\mathsf T}\bm{g} + \mathbb{E}[\bm{q}^{\mathsf T}\boldsymbol{\gamma} \mathbf{e}^{\mathsf T} \boldsymbol{\xi}] \label{eq:LDR_a}\\
\textrm{s.t.} \quad & \eqref{eq:LDR_b2}, \eqref{eq:LDR_c2}, \eqref{eq:equlity_1}, \eqref{eq:inequlity_1}, \eqref{eq:inequlity_2}  \label{eq:LDR_b}\\
  &   \mathbb{P}\{\mathbf{a}_{i} (\mathbf{x})^{\mathsf T} \boldsymbol{\xi} \leq {b}_{i}(\mathbf{x})\} \geq 1-\epsilon_{i}, \forall i \in \ccalI.
    \label{eq:cc_general}
\end{align}
\end{subequations}
Here, the chance constraints \eqref{eq:cc_general} guarantee that each inequality in~\eqref{eq:set} holds with a probability of at least $1-\epsilon_{i}$, for a pre-specified tolerance level $\epsilon_{i}$.

\subsection{Benchmark Methods for CC-OTS}
We present two benchmark methods for approximating the chance constraints \eqref{eq:cc_general}, which is the most critical step in solving \eqref{eq:LDR}. These two approximation methods give rise to mixed-integer problems and will be used to numerically compare with the proposed DRCC methods later on.

\subsubsection{Sample Average Approximation (SAA)}
Given independently and identically distributed (i.i.d.) uncertainty samples $\{{\boldsymbol{\xi}}^{j}\}$ with $j \in \mathcal{J} := \{1,\cdots,S\}$, the SAA approach \cite{luedtke2010integer} replaces the CC constraints \eqref{eq:cc_general} with the sample-based empirical distribution $\hat{\mathbb{P}}$ that assigns equal mass to all samples.

Under the empirical distribution, the chance constraint \eqref{eq:cc_general} is equivalent to the system of mixed-integer linear constraints: 
\begin{subequations} \label{eq:ben_a}
\begin{align}
  & w^{j}_{i} \in \{0, 1\}, \quad  \forall i \in \ccalI, \, j \in \mathcal{J}.   \label{eq:ben_a1}\\
  & {b}_{i}(\mathbf{x}) - {\mathbf{a}_{i}}(\mathbf{x})^{\mathsf T} \boldsymbol{\xi}^{j} + {\mathrm M} w^{j}_{i} \geq 0, \quad \forall i, \, j. \label{eq:ben_a2}\\
&  \textstyle  \sum_{j=1}^{S} w^{j}_{i} \leq S \epsilon_{i}, \quad \forall i. \label{eq:ben_a3}
\end{align}
\end{subequations}
with a sufficiently large ${\mathrm M}$. 
The binary decision variable $w^{j}_{i}$ indicates whether the constraint $\mathbf{a}_{i} (\mathbf{x})^{\mathsf T} \boldsymbol{\xi}^{j} \leq {b}_{i}(\mathbf{x})$ holds or not.
If $w^{j}_{i} = 0$, \eqref{eq:ben_a2} is equivalent to $\mathbf{a}_{i} (\mathbf{x})^{\mathsf T} \boldsymbol{\xi}^{j} \leq {b}_{i}(\mathbf{x})$ and the constraint holds; otherwise, \eqref{eq:ben_a2} becomes redundant for a large enough $\mathrm M$. If each $\boldsymbol{\xi}^{j}$ is randomly sampled with an equal probability (i.e., $\mathbb{P}(\boldsymbol{\xi}^{j}) = 1/S$), constraint \eqref{eq:ben_a3} guarantees that the sample-based probability of violation $(\frac{1}{S} \sum_{j=1}^{S} w^{j}_{i})$ is not greater than the threshold $\epsilon_{i}$. 
{Under SAA, the resulting approximation of \eqref{eq:LDR} is an MILP. Notice, however, that the number of constraints can grow quickly with the sample size $S$. Due to this scalability issue, the SAA approach will be mainly used for the small test case in numerical studies.
}


\subsubsection{Gaussian Approximation}
This method assumes that $\bm{\xi}$ is a Gaussian random vector with mean $\boldsymbol\mu$ and covariance $\boldsymbol\Sigma$. 
For the chance constraint \eqref{eq:cc_general} with a typical threshold of $\epsilon_{i} \leq \tfrac{1}{2}$, the Gaussian distribution leads to an equivalent second-order cone (SOC) constraint \cite[Sec. 4.4]{boyd2004convex}. 
To briefly introduce the basic idea of this method, we consider the variance of $\mathbf{a}_{i} (\mathbf{x})^{\mathsf T} \boldsymbol{\xi}$ as denoted by $\sigma^{2}$ and constraint $i \in \ccalI$ in \eqref{eq:cc_general} now becomes
\begin{align}
    \mathbb{P}\!\left(\!\frac{\mathbf{a}_{i} (\mathbf{x})^{\mathsf T} \boldsymbol{\xi}-\mathbf{a}_{i} (\mathbf{x})^{\mathsf T}\boldsymbol\mu}{\sigma} \leq \frac{{b}_{i}(\mathbf x)-\mathbf{a}_{i} (\mathbf{x})^{\mathsf T}\boldsymbol\mu}{\sigma}\right)\!\!\geq\! 1-\epsilon_{i}. \label{eq:cc_ga}
\end{align}
With $\Phi^{-1} (\epsilon_{i})$ denoting $\epsilon_{i}$-quantile of the standard normal distribution, \eqref{eq:cc_ga} is equivalent to the following SOC constraint:
\begin{align}
    \boldsymbol\mu^{\mathsf T}\mathbf{a}_{i} (\mathbf{x}) + \Phi^{-1} (1-\epsilon_{i}) \big \|\boldsymbol\Sigma^{\frac{1}{2}} \mathbf{a}_{i}(\mathbf{x})\big \|_{2} \leq {b}_{i}(\mathbf x) \label{eq:socp}
\end{align}
Under Gaussian approximation, the CC-OTS problem \eqref{eq:LDR} becomes a mixed-integer SOCP (MISOCP). This method requires no sampling, yet its uncertainty model can be too restrictive for the renewable perturbations in practice.

\subsection{Quantifying the Level of Renewable Curtailment}
\label{sec:quantify}

In practice, curtailment of renewable generation is used to avoid oversupply and to maintain constraint satisfaction \cite{ross2015multiobjective}. 
Upon solving any CC-OTS problem with the optimal topology $\bm{z}^*$ and other values (denoted by $^*$),
one can apply the Monte Carlo method 
using a large number of uncertainty scenarios to obtain the average of resultant curtailment values. Specifically, for a given renewable scenario $\tilde{{\boldsymbol\xi}}$, we determine the renewable curtailment vector ${\boldsymbol\xi}_{c} \in \mathbb R_{+}^K$ in order to satisfy all network constraints, as given by
\begin{subequations}
\label{eq:optcurl}
\begin{align}
\min \quad & \mathbf 1^{\mathsf T} {\boldsymbol\xi}_{c} \label{eq:optcurl_a}\\
\textrm{s.t.} \quad 
  &  {\boldsymbol\xi}_{c} \in \mathbb R_{+}^K\\
  & \underline{\boldsymbol{\theta}} \leq \boldsymbol{\theta}^{*} + \bm{Y}_{\theta}^{*}(\tilde{{\boldsymbol\xi}} -{\boldsymbol\xi}_{c})
  \leq \overline{\boldsymbol{\theta}} \\
  & \underline{\bm{f}} \circ \bm{z}^{*} \leq \bm{f}^{*} + \bm{Y}_{f}^{*}(\tilde{{\boldsymbol\xi}} - {\boldsymbol\xi}_{c}) 
  \leq \overline{\bm{f}} \circ \bm{z}^{*},
\end{align}
\end{subequations}
where the network constraints in \eqref{eq:set} have been simplified to the linear ones in~\eqref{eq:optcurl} by fixing $\bm{z}^*$. 
Note the the curtailment criterion in \eqref{eq:optcurl_a} is essentially the $L_1$ norm of ${\boldsymbol\xi}_{c}$, which gives rise to an efficient linear program (LP) in \eqref{eq:optcurl}. Other criteria such as $L_2$ norm can be used as well, at possibly increased computation complexity. By determining ${\boldsymbol\xi}_{c}$ using~\eqref{eq:optcurl}, the process for quantifying renewable curtailment 
boils down to computing the average of  $(\mathbf 1^{\mathsf T} {\boldsymbol\xi}_{c})$ over a large number of renewable scenarios. This process serves to evaluate the impact of OTS solutions in terms of renewable curtailment level, which also applies to the DRCC-OTS solutions to be discussed soon. Note this evaluation is completed offline and does not affect the real-time computation of any OTS solution. 
As more frequent constraint violations naturally lead to higher renewable curtailment, the objective cost attained by \eqref{eq:optcurl} serves as an important criterion to evaluate the robustness performance of CC-OTS solutions, as shown by the numerical results in Section \ref{sec:num}.

\section{Distributionally Robust \\ Chance-Constrained OTS}
\label{sec:drcc}
The distributionally robust optimization (DRO) framework has been recognized as a powerful yet potentially tractable approach to deal with uncertainty in energy systems \cite{wei2015distributionally,lu2018security,li2020confidence,zheng2020adaptive,babaei2020distributionally,xie2017distributionally}.
The DRO framework does not assume a particular probability distribution. Instead, it constructs an \textit{ambiguity set} of plausible distributions that are consistent with the available statistical and structural information on uncertainty. A safe decision is then sought that is feasible to the chance constraints for  all distributions within the ambiguity set. Hence, the framework mitigates data overfitting issues and yields superior performance in  out-of-sample (OOS) tests. 

To develop the DRO-based OTS formulation, consider the distributionally robust chance constraints (DRCC) for \eqref{eq:set} as 
\begin{align}
    \inf_{\mathbb{P} \in \mathcal{P}} \mathbb{P}\{\mathbf{a}_{i}(\mathbf{x})^{\mathsf T} \boldsymbol{\xi} \leq {b}_{i}(\mathbf{x})\} \geq 1-\epsilon_{i} \: \: \forall i \in \ccalI, \label{eq:drcc}
\end{align}
which require each chance constraint to be satisfied under all  probability distributions $\mathbb P\in\mathcal{P}$. 
Typical ambiguity sets studied in related DRO-based power system decision-making problems fall into the following categories: i) moment-based ambiguity set \cite{zare2018distributionally,zhao2017distributionally,hassan2020stochastic}, ii) distance-based ambiguity set \cite{guo2018data,duan2018distributionally,poolla2020wasserstein} and iii) structural-based ambiguity set \cite{roald2015security,li2019distributionally}.
We consider the DRCC reformulations using the moment-based ambiguity set (mean and mean absolute deviation) and the distance-based ambiguity set (Wasserstein distance). Both of them are amenable to mixed-integer linear programming reformulations.

\subsection{Mean and Mean Absolute Deviation Ambiguity Set}
The mean and mean absolute deviation (mean-MAD) ambiguity set \cite{hanasusanto2015distributionally} is defined as:
\begin{align} \label{eq:ambiguity_1}
   \mathcal{P}_{1} := \left\{\mathbb{P} \in \mathcal{P}_{0} (\boldsymbol\Xi): \mathbb{E}[\boldsymbol{\xi}] = \boldsymbol{\mu}, \ \mathbb{E}[|\boldsymbol{\xi} - \boldsymbol{\mu}|] \leq \boldsymbol{\sigma}\right\},
\end{align}
which includes all distributions with the mean equal to $\boldsymbol{\mu} \in \mathbb{R}^{K}$ and the mean absolute deviation bounded by $\boldsymbol{\sigma} \in \mathbb{R}_{+}^{K}$.
Note that the absolute value and its inequality are both component-wise. 
This ambiguity set can be extended to impose certain dependence structures (see e.g., \cite[Sec.~5]{wiesemann2014distributionally}).
Each worst-case probability $\inf_{\mathbb{P} \in \mathcal{P}} \mathbb{P}\{\mathbf{a}_{i}(\mathbf{x})^{\mathsf T} \boldsymbol{\xi} \leq {b}_{i}(\mathbf{x})\}$ in~\eqref{eq:drcc} over the ambiguity set $\mathcal P= \mathcal{P}_{1}$ boils down to the following optimization problem:
\begin{subequations} \label{eq:int}
\begin{align}
Z_{\mathcal{P}_{1}} = \inf \quad & \int {\mathbbm{1}}\{{\mathbf{a}_{i} (\mathbf{x})^{\mathsf T}}\boldsymbol{\xi} \leq {b}_{i}(\mathbf{x})\}{v} (\mathrm{d}\boldsymbol{\xi})  \label{eq:int_a}\\
\textrm{s.t.} \quad &  {v}(\cdot) \in \mathcal{M}_{+}, \label{eq:int_b}\\ 
  & \int {v} (\mathrm{d}\boldsymbol{\xi}) = 1,  \label{eq:int_c}\\
  & \int \boldsymbol{\xi} {v} (\mathrm{d}\boldsymbol{\xi}) = \boldsymbol{\mu},  \label{eq:int_d}\\
  & \int |\boldsymbol{\xi} - \boldsymbol{\mu}| {v} (\mathrm{d}\boldsymbol{\xi}) \leq \boldsymbol{\sigma}  \label{eq:int_e},
\end{align}
\end{subequations}
where ${\mathbbm{1}}(\cdot)$ denotes the indicator function  for the inequality constraint, while  $\mathcal{M}_{+}$ defines the set of nonnegative measures. Constraints 
\eqref{eq:int_c} - \eqref{eq:int_e} are essentially the integral forms of~\eqref{eq:ambiguity_1}. 
As the objective and constraint functions are all linear in the unknown measure $v(\cdot)$, the problem \eqref{eq:int} is a convex semi-infinite linear program (SILP). If  the DRCC \eqref{eq:drcc} is feasible under $\ccalP_1$, then we have  $Z_{\mathcal{P}_{1}} \geq 1-\epsilon_{i}$ for constraint $i \in \ccalI$. By denoting $\alpha \in \mathbb{R}$, $\boldsymbol{\beta} \in \mathbb{R}^{K}$, and $\boldsymbol{\kappa} \in \mathbb{R}_{+}^{K}$ as the dual variables of constraints \eqref{eq:int_c}-\eqref{eq:int_e}, respectively, 
we can formulate the dual problem of \eqref{eq:int} as:
\begin{subequations} \label{eq:ind}
\begin{align}
\sup \quad & \alpha + \bbbeta^{\mathsf T} \bbmu - \bbkappa^{\mathsf T} \boldsymbol{\sigma} \label{eq:ind_a}\\
\textrm{s.t.} \quad & \alpha \in \mathbb{R}, \;\bbbeta \in \mathbb{R}^{K}, \;\bbkappa \in \mathbb{R}_{+}^{K}, \label{eq:ind_b}\\
  & {\mathbbm{1}}\{\mathbf{a}_{i} (\mathbf{x})^{\mathsf T}\boldsymbol{\xi} \leq {b}_{i}(\mathbf{x})\} \geq \alpha + \bbbeta^{\mathsf T} \bbxi - \bbkappa^{\mathsf T} |\bbxi - \bbmu| \;\; \forall {\boldsymbol\xi} \in  \boldsymbol{\Xi}. \label{eq:ind_c}
\end{align}
\end{subequations}
Strong duality holds as the ambiguity set $\mathcal{P}_{1}$ satisfies the Slater's condition \cite{hanasusanto2017ambiguous} for the SILP \eqref{eq:int}. 
The semi-infinite constraint for the dual problem \eqref{eq:ind} boils down to two cases according to the indicator ${\mathbbm{1}}(\cdot)$ in \eqref{eq:ind_c}. Specifically, it equals to 0 for any $\bbxi$ such that $\mathbf{a}_{i} (\mathbf{x})^{\mathsf T}\boldsymbol{\xi} > {b}_{i}(\mathbf{x})$, or 1 for any other choice of $\bbxi$. These two cases can be reformulated using standard convex duality theory to arrive at the 
following equivalent linear constraints:
\begin{subequations}\label{eq:L}
\begin{align}
  & \alpha' + \bbbeta'^{\mathsf T} \bbmu - \bbkappa'^{\mathsf T} \boldsymbol{\sigma} \geq (1-\epsilon_{i})\lambda' \label{eq:LL_a}\\
  & \alpha' + ({\bbpi'}_{1}^{\mathsf T} - {\bbtau'}_{1}^{\mathsf T})\bbmu + {\bbpsi'}_{1}^{\mathsf T} \mathbf{t}  \leq \lambda' \\
  & {\bbbeta'}^{\mathsf T} + {\bbtau'}_{1}^{\mathsf T} = {\bbpi'}_{1}^{\mathsf T} + {\bbpsi'}_{1}^{\mathsf T} \mathbf{U}  \\
  & {\bbpi'}_{1}^{\mathsf T} + {\bbtau'}_{1}^{\mathsf T} = {\bbkappa'}^{\mathsf T}\\
  & \alpha' + ({\bbpi'}_{2}^{\mathsf T} - {\bbtau'}_{2}^{\mathsf T})\bbmu  + {\bbpsi'}_{2}^{\mathsf T} \mathbf{t} \leq {b}_{i}(\mathbf{x}) \label{eq:LL_e}\\
  & {\bbbeta'}^{\mathsf T} + \mathbf{a}_{i} (\mathbf{x})^{\mathsf T} + {\bbtau'}_{2}^{\mathsf T} = {\bbpi'}_{2}^{\mathsf T}  + {\bbpsi'}_{2}^{\mathsf T} \mathbf{U} \label{eq:LL_f}\\
  & {\bbpi'}_{2}^{\mathsf T} + {\bbtau'}_{2}^{\mathsf T} = {\bbkappa'}^{\mathsf T}. \label{eq:LL_g}
\end{align}
\end{subequations}
The dual variables $\bbpi' \in \mathbb{R}_{+}^{K}$ and $\bbtau' \in \mathbb{R}_{+}^{K}$  are introduced for the epigraph based constraints $\boldsymbol{\rho} \geq \boldsymbol{\xi} - \boldsymbol{\mu}$ and $\boldsymbol{\rho} \leq \boldsymbol{\xi} - \boldsymbol{\mu}$, respectively; the dual variables $\bbpsi' \in \mathbb{R}_{+}^{W}$ are assigned to the linear constraints of the support set
$\bbXi$. Note that the dual variable $\lambda > 0$ corresponding to the new constraint $\mathbf{a}_{i} (\mathbf{x})^{\mathsf T}\boldsymbol{\xi} > {b}_{i}(\mathbf{x})$ introduces bilinearity in the original decision variables $\mathbf x$. We address this by dividing all constraints with $\lambda$ and performing the change of variables for the 
primal variables  in \eqref{eq:ind}  as  $\alpha' = \frac{\alpha}{\lambda} \in \mathbb{R}$ (similarly for $\bbbeta'$ and $\bbkappa'$), and the dual variables as $\lambda' = \frac{1}{\lambda} \in \mathbb{R}_{+}$ (similarly for the aforementioned $\bbpi'$ and $\bbtau'$).



\begin{proposition} \label{eq:prop3}
The DRCC-OTS problem under ambiguity set $\mathcal{P}_{1}$ is equivalent to the following optimization problem:
\begin{subequations}\label{eq:DRCC1}
\begin{align}
\min \quad & \bm{c}^{\mathsf T}\bm{g} + \mathbb{E}[\bm{q}^{\mathsf T}\boldsymbol{\gamma} \mathbf{e}^{\mathsf T} \boldsymbol{\xi}] \\
\textrm{s.t.} \quad & \alpha'\in \mathbb{R}, \: \bbbeta'\in \mathbb{R}^{K}, \: \bbkappa'\in \mathbb{R}_{+}^{K} \\
  &  \lambda'\in \mathbb{R}_{+}, \: \bbpi'\in \mathbb{R}_{+}^{K}, \: \bbtau' \in \mathbb{R}_{+}^{K}, \: \bbpsi' \in \mathbb{R}_{+}^{W}\\
  &   \eqref{eq:LDR_b}, \: \eqref{eq:LL_a} - \eqref{eq:LL_g}.
\end{align}
\end{subequations}
\end{proposition}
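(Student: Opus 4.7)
The plan is to derive the equivalence by dualizing the worst-case probability for each chance constraint and then reformulating the resulting semi-infinite inequality with linear dualization over the polytopal support. I would proceed in four stages, largely tracing out the calculation that is sketched just above the proposition statement.

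First, I would fix an index $i\in\ccalI$ and rewrite the DRCC \eqref{eq:drcc} as $Z_{\ccalP_1}\ge 1-\epsilon_i$, where $Z_{\ccalP_1}$ is the optimal value of the SILP \eqref{eq:int} over nonnegative measures on $\bbXi$. Because $\ccalP_1$ is nonempty and admits a distribution in its relative interior (Slater's condition for the moment constraints), strong duality holds between \eqref{eq:int} and \eqref{eq:ind}, so the DRCC is equivalent to the existence of $(\alpha,\bbbeta,\bbkappa)$ with $\bbkappa\ge\mathbf 0$ feasible to \eqref{eq:ind_c} and with objective value at least $1-\epsilon_i$.

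Second, I would break the semi-infinite inequality \eqref{eq:ind_c} according to the two values of the indicator. On the set $\{\bbxi\in\bbXi:\mathbf{a}_i(\mathbf{x})^{\mathsf T}\bbxi>b_i(\mathbf{x})\}$ one obtains the robust inequality $\alpha+\bbbeta^{\mathsf T}\bbxi-\bbkappa^{\mathsf T}|\bbxi-\bbmu|\le 0$, and on the full set $\bbXi$ one obtains $\alpha+\bbbeta^{\mathsf T}\bbxi-\bbkappa^{\mathsf T}|\bbxi-\bbmu|\le 1$. I would linearize the absolute value by introducing epigraph variables paired with the pair of inequalities $\bbrho\ge\bbxi-\bbmu$ and $\bbrho\ge-(\bbxi-\bbmu)$, with corresponding nonnegative duals $\bbpi$ and $\bbtau$ satisfying $\bbpi+\bbtau=\bbkappa$, and assign duals $\bbpsi\ge\mathbf 0$ to $\mathbf{U}\bbxi\le\mathbf{t}$. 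Strong LP duality on each polytopal inner maximization then yields the finite inequality systems captured by \eqref{eq:LL_b}--\eqref{eq:LL_d} (from the second case) and \eqref{eq:LL_e}--\eqref{eq:LL_g} (from the first case), while the objective condition $\alpha+\bbbeta^{\mathsf T}\bbmu-\bbkappa^{\mathsf T}\bbsigma\ge 1-\epsilon_i$ matches \eqref{eq:LL_a}.

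The main obstacle is the bilinearity that appears because the first case is conditional on the halfspace $\{\bbxi:\mathbf{a}_i(\mathbf{x})^{\mathsf T}\bbxi>b_i(\mathbf{x})\}$, whose Lagrange multiplier $\lambda>0$ multiplies the primal variables $(\alpha,\bbbeta,\bbkappa)$ of \eqref{eq:ind} as well as the decision-dependent coefficients $(\mathbf{a}_i(\mathbf{x}),b_i(\mathbf{x}))$. I would resolve this by the homogenization step already hinted at in the excerpt: since any feasible $\lambda$ is strictly positive, dividing the entire system by $\lambda$ and defining the scaled variables $\alpha'=\alpha/\lambda$, $\bbbeta'=\bbbeta/\lambda$, $\bbkappa'=\bbkappa/\lambda$, $\lambda'=1/\lambda$, together with analogous scalings of $\bbpi,\bbtau,\bbpsi$, turns every product into a term affine in $\mathbf{x}$ and in the new variables. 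After this change of variables, the right-hand side of \eqref{eq:LL_a} becomes $(1-\epsilon_i)\lambda'$, constraints \eqref{eq:LL_b}--\eqref{eq:LL_d} absorb the scaled upper bound $\lambda'$ from the second case, and \eqref{eq:LL_e}--\eqref{eq:LL_g} inherit the unscaled right-hand side $b_i(\mathbf{x})$ from the first case.

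Finally, I would combine these per-constraint reformulations with the deterministic constraints already established in Propositions~\ref{eq:prop1} and~\ref{eq:prop2} (collected as \eqref{eq:LDR_b}) and keep the original objective \eqref{eq:LDR_a}, which depends only on $\bbgamma$ through $\mathbb{E}[\bbxi]=\bbmu$. The resulting program is exactly \eqref{eq:DRCC1}, establishing the claimed equivalence.
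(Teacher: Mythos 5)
Your proposal is correct and follows essentially the same route as the paper's own derivation (sketched in the text preceding the proposition and carried out in full in the Appendix): strong duality for the moment problem \eqref{eq:int}, a case split on the indicator in \eqref{eq:ind_c}, epigraph linearization of the mean absolute deviation together with LP duality over the polytopal support $\boldsymbol\Xi$, and the change of variables dividing by $\lambda>0$ to eliminate the bilinearity, finally combined with \eqref{eq:LDR_b}. No substantive differences.
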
 
Thanks to all the linear constraints, the DRCC-OTS problem in \eqref{eq:DRCC1} is an MILP. The DRCC-OTS significantly improves the scalability over the SAA-based MILP problem, as it effectively uses dualization techniques to attain a fixed set of linear constraints such that \eqref{eq:drcc} holds for any distribution in $\ccalP_1$.  
Therefore, the resulting problem \eqref{eq:DRCC1} is \textit{scenario-free} and of low computational complexity for efficient implementations in real time.

\noindent \textbf{Incorporating Multimodality Information:} To obtain less conservative solutions to the aforementioned DRCC model, one can further incorporate multimodality information of the uncertainty into the formulation. This additional structural information is particularly relevant to the problem studied in the paper, as it has been observed that wind energy data exhibits multimodal behavior \cite{hu2016estimating,yurucsen2016probability,wang2021wind}. 
To this end, we assume the actual distribution to be a mixture of $m$ distinct distributions $\mathbb{P}_{1}, \cdots, \mathbb{P}_{m}$, with known probabilities $p_{1}, \cdots, p_{m}$, and each $\mathbb{P}_{j}$ has known mean and MAD values $(\boldsymbol{\mu}_{j}, \boldsymbol{\sigma}_{j})$. In this setting, the ambiguity set with multimodality information is given by
\begin{align} 
   \mathcal{P}_{1}' := \sum_{j=1}^{m} p_{j} \mathcal{P}_1(\boldsymbol{\mu}_{j}, \boldsymbol{\sigma}_{j}),
\end{align}
where $\mathcal{P}_1(\boldsymbol{\mu}_{j}, \boldsymbol{\sigma}_{j})$ denotes the mean-MAD ambiguity set \eqref{eq:ambiguity_1} with  mean $\boldsymbol{\mu}_{j}$ and MAD $\boldsymbol{\sigma}_{j}$. 
Each worst-case probability $\inf_{\mathbb{P} \in \mathcal{P}_1'} \mathbb{P}\{\mathbf{a}_{i}(\mathbf{x})^{\mathsf T} \boldsymbol{\xi} \leq {b}_{i}(\mathbf{x})\}$ can be cast as the following problem:
\begin{subequations} \label{eq:int_multi}
\begin{align}
\!Z_{\mathcal{P}_{1}'} \!= \!\inf\! \quad & \sum_{j=1}^{m} p_{j} \int {\mathbbm{1}}\{{\mathbf{a}_{i} (\mathbf{x})^{\mathsf T}}\boldsymbol{\xi} \leq {b}_{i}(\mathbf{x})\}{v_j} (\mathrm{d}\boldsymbol{\xi})  \label{eq:int_a}\\
\textrm{s.t.} \quad &  {v_j} (\cdot) \in \mathcal{M}_{+}, \quad \forall j = 1, \cdots, m, \label{eq:int_b}\\ 
  & \int {v_j} (\mathrm{d}\boldsymbol{\xi}) = 1, \quad \forall j = 1, \cdots, m, \label{eq:int_c}\\
  & \int \boldsymbol{\xi} {v_j} (\mathrm{d}\boldsymbol{\xi}) = \boldsymbol{\mu}_{j}, \quad \forall j = 1, \cdots, m,  \label{eq:int_d}\\
  & \int |\boldsymbol{\xi} - \boldsymbol{\mu}_{j}| {v_j} (\mathrm{d}\boldsymbol{\xi}) \leq \boldsymbol{\sigma}_{j}, \quad \forall j = 1, \cdots, m  \label{eq:int_e}.
\end{align}
\end{subequations}
However, applying similar derivations as in \eqref{eq:int}-\eqref{eq:L} to the multimodal model leads to a non-convex problem, as there will be multiple bilinear products $\lambda_{j} \mathbf{a}_{i} (\mathbf{x})$ and $\lambda_{j} {b}_{i}(\mathbf{x}), \forall j = 1, \cdots, m$ for the different modes that cannot be handled simultaneously.
To deal with this bilinearity, we propose to adopt the block coordinate descent (BCD) algorithm in \cite[Sec.~5]{hanasusanto2015distributionally} for a tractable solution, as described in Algorithm \ref{alg:alg1}. 

For ease of exposition, we denote all the dual variables that are not directly coupled with $\mathbf{x}$ as $\Upsilon_{j} = \{\alpha_{j},\bbbeta_{j},\bbkappa_{j},{\bbpi}_{j},{\bbtau}_{j},{\bbpsi}_{j}\}, \forall j = 1, \cdots, m$.
The BCD algorithm starts with an initial solution $\mathbf{x}^{0}$, which can be the optimal solution from the unimodality model \eqref{eq:DRCC1}. Per iteration $t$, an uncertainty quantification problem is solved to find the worst-case probability under the previous iterate $\mathbf{x}^{t-1}$. Once the dual multipliers $\lambda_{j}, \Upsilon_{j}$ are obtained, we fix $\lambda_{j}$ and solve the DRCC-OTS to update the iterate $\mathbf{x}^{t}$. This iterative approach is repeated until the difference between consecutive objectives is below a prescribed convergence threshold $\omega$. Note that the optimization problems solved in each iteration are convex and can be solved efficiently. We will investigate this multimodality model in numerical tests, as well.


\begin{figure}[!t] 
 \removelatexerror
  \begin{algorithm}[H]
 \caption{Block Coordinate Descent Algorithm}
 \label{alg:alg1}
 \begin{algorithmic}[1]
 \renewcommand{\algorithmicrequire}{\textbf{Input:}}
 \renewcommand{\algorithmicensure}{\textbf{Output:}}
 \REQUIRE $\mathbf{x}^{0}, p_{j}, \boldsymbol\Xi_{j}, \boldsymbol{\mu}_{j}, \boldsymbol{\sigma}_{j}, \forall j = 1, \cdots, m$
 \ENSURE  $\mathbf{x}$
 \\ \textit{Initialization}: \text{Initial feasible solution $\mathbf{x}^{0}$} 
  \STATE Get objective value $f^{0}$ using $\mathbf{x}^{0}$, set $t=1$.
  \FOR {$t = 1$ to $t_{\max}$}
  \STATE Uncertainty Quantification: Find the optimal $\left(\Upsilon_{j}^{*}, \lambda_{j}^{*} \right)$ to the dual of \eqref{eq:int_multi} with input $\mathbf x^{t-1}$. 
   Set $\lambda_{j}^{t} \leftarrow \lambda_{j}^{*}$.
  \STATE Policy Update: Fix $\lambda_{j}^{t}$, solve DRCC-OTS and obtain the optimal  $(\mathbf{x}^{*},\Upsilon_{j}^{*})$.
   Set $\mathbf{x}^{t} \leftarrow \mathbf{x}^{*}$ and compute the  objective value $f^{t}$.
  \IF {($|f^{t} - f^{t-1}| < \omega$)}
  \STATE Stop the algorithm, and set $\mathbf{x} = \mathbf{x}^{t}$
  \ENDIF
  \ENDFOR
 \RETURN $\mathbf{x}$
 \end{algorithmic}
 \end{algorithm}
\end{figure}

\subsection{Wasserstein Ambiguity Set}
The DRCC with the ambiguity set described in this section ensures the robustness against all probability distributions within a prescribed Wasserstein distance from the empirical distribution~$\hat{\mathbb{P}}$. Compared with the mean-MAD criterion, the Wasserstein metric is purely data-driven and constructed using actual data samples. 
With more samples available, the latter better reveals the actual uncertainty distribution and thus can lead to less conservative DRCC solutions.

We adopt the $\infty$-Wasserstein ambiguity set which is known to enjoy a more tractable reformulation \cite{xie2019distributionally,xie2021distributionally}.
The $\infty$-Wasserstein ambiguity set is defined as
\begin{align}
   \mathcal{P}_{2} := \left\{\mathbb{P} 
   \in \mathcal{P}_{0} (\boldsymbol\Xi): d_{\infty}(\mathbb{P},\hat{\mathbb{P}}) \leq {\delta} \right\} \label{eq:P2},
\end{align}
where $\delta > 0$ is a given Wasserstein radius 
that determines the finite-sample performance guarantee of the DRCC problem; see e.g., \cite{bertsimas2021two}.
The radius parameter $\delta$ depends on the number of sample $S$ in a monotonically decreasing fashion. One choice of setting $\delta$ is $\delta = \eta S^{-\frac{1}{k\max\{K,2\}}}$ \cite[Cor.~1]{bertsimas2021two}, where $K$ is the dimension of the uncertainty while $k$ and $\eta$ are problem-dependent constants.
The $\infty$-Wasserstein distance between two distributions $\mathbb{P}_{1}$ and $\mathbb{P}_{2}$ is given by
\begin{align}
    d_{\infty}(\mathbb{P}_{1},\mathbb{P}_{2}) := \inf \quad & \esssup\|{\tilde{\bm{\xi}}_{1} - \tilde{\bm{\xi}}_{2}}\| \nonumber\\
    \textrm{s.t.} \quad & \mathbb{P} \in \mathcal{P}_{0} (\mathbb R^{K} \times \mathbb R^{K}), \label{eq:was} 
\end{align}
where $\mathbb{P}$ is the joint distribution of $\tilde{\bm{\xi}}_{1}$ and $\tilde{\bm{\xi}}_{2}$ with marginals $\mathbb{P}_{1}$ and $\mathbb{P}_{2}$, respectively. 
We use $\esssup$ to denote the essential supremum of a function and $\|\cdot\|$ to denote a norm in $\mathbb{R}^{K}$.
For each constraint $i \in \ccalI$, suppose a big-M coefficient
${\mathrm M}_{j}$ exists to bound
\begin{align*}
{\mathrm M}_{j} \geq \max_{\mathbf{x}} \big\{ {\mathbf{a}_{i} (\mathbf{x})^{\mathsf T}}{\boldsymbol{\xi}}^{j} + {\delta} \|\mathbf{a}_{i}(\mathbf{x}) \|_{*} -  {b}_{i}(\mathbf{x}) \big \},~\forall j\in \ccalJ 
\end{align*}
where $\left\| \, \cdot \, \right\|_{*}$ is the corresponding dual norm. This way, the DRCC in \eqref{eq:drcc} under $\ccalP_2$ can be represented as the following mixed-integer  constraints \cite[Cor.~4]{xie2019distributionally}:
\begin{subequations}\label{eq:Wasser2}
\begin{align}
  & \eqref{eq:ben_a1}, \eqref{eq:ben_a3}  \label{eq:was1}\\
  & {\delta} \|\mathbf{a}_{i} (\mathbf x)\|_{*} \leq  {b}_{i}(\mathbf{x}) - {\mathbf{a}_{i} (\mathbf{x})^{\mathsf T}}{\boldsymbol{\xi}}^{j} + {\mathrm M}_{j}w^{j}_{i}, \quad \forall i, \, j. \label{eq:was2}
\end{align}
\end{subequations}
Note that this reformulation mimics the SAA-based one in \eqref{eq:ben_a}, by changing the lower bound of the right-hand side (RHS) of \eqref{eq:was2} from $0$ to ${\delta} \|\mathbf{a}_{i} (\mathbf x)\|_{*}$, which acts as a regularizer. Intuitively, a smaller radius ${\delta}$ implies the restriction to distributions more similar to the empirical one $\hat{\mathbb{P}}$. 
{Accordingly, the constraint \eqref{eq:was2} becomes less restrictive.}
As ${\delta}$ decreases to $0$, $\ccalP_2$ reduces to the singleton 
$\hat{\mathbb{P}}$ itself and \eqref{eq:Wasser2} becomes equivalent to the SAA approach.

\begin{proposition} \label{eq:prop4}
The DRCC-OTS problem under ambiguity set $\mathcal{P}_{2}$ is equivalent to the following optimization problem:
\begin{subequations}\label{eq:DRCC2}
\begin{align}
\min \quad & \bm{c}^{\mathsf T}\bm{g} + \mathbb{E}[\bm{q}^{\mathsf T}\boldsymbol{\gamma} \mathbf{e}^{\mathsf T} \boldsymbol{\xi}] \label{eq:DRCC2_a}\\
\textrm{s.t.} \quad &  \eqref{eq:LDR_b}, \: \eqref{eq:was1}, \: \eqref{eq:was2}.
    \label{eq:DRCC2_b}
\end{align}
\end{subequations}
\end{proposition}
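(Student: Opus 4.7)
My plan is to prove Proposition~\ref{eq:prop4} by replacing the distributionally robust chance constraints \eqref{eq:drcc} on the set $\ccalI$ with the tractable mixed-integer linear system \eqref{eq:was1}--\eqref{eq:was2}, while leaving the deterministic constraints \eqref{eq:LDR_b} (which were already derived in Propositions~\ref{eq:prop1} and~\ref{eq:prop2}) and the objective \eqref{eq:LDR_a} untouched. Since those latter pieces are unaffected by the choice of ambiguity set, the entire argument reduces to showing that, for each $i \in \ccalI$, the individual chance constraint $\inf_{\mathbb{P}\in\ccalP_{2}}\mathbb{P}\{\bba_{i}(\bbx)^{\mathsf T}\bbxi \leq b_{i}(\bbx)\}\geq 1-\epsilon_{i}$ under the $\infty$-Wasserstein ambiguity set \eqref{eq:P2} admits the big-M-plus-regularization representation in \eqref{eq:was2}, coupled with the cardinality control \eqref{eq:ben_a3} on the binary indicators $w_{i}^{j}$.

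First, I would decouple the worst-case probability computation across samples. A standard feature of the $\infty$-Wasserstein metric \eqref{eq:was} is that any feasible $\mathbb{P}\in\ccalP_{2}$ assigns mass $1/S$ to points each lying within a $\delta$-ball (in the chosen norm) of one of the empirical atoms $\bbxi^{j}$. Hence
\[
\inf_{\mathbb{P}\in\ccalP_{2}}\mathbb{P}\{\bba_{i}(\bbx)^{\mathsf T}\bbxi \leq b_{i}(\bbx)\}
= \frac{1}{S}\sum_{j=1}^{S}\mathbbm{1}\!\left\{\sup_{\|\bbzeta\|\leq \delta}\bba_{i}(\bbx)^{\mathsf T}(\bbxi^{j}+\bbzeta)\leq b_{i}(\bbx)\right\}.
\]
Evaluating the inner supremum with the definition of the dual norm gives $\bba_{i}(\bbx)^{\mathsf T}\bbxi^{j}+\delta\|\bba_{i}(\bbx)\|_{*}\leq b_{i}(\bbx)$ as the per-sample robust condition, so the DRCC becomes the requirement that at least $(1-\epsilon_{i})S$ of these $S$ deterministic inequalities are satisfied.

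Next, I would encode this ``at least a fraction $1-\epsilon_{i}$'' condition via binary indicator variables exactly as in the SAA reformulation \eqref{eq:ben_a}. Introduce $w_{i}^{j}\in\{0,1\}$ with $w_{i}^{j}=0$ meaning sample $j$ satisfies the robust inequality and $w_{i}^{j}=1$ meaning it may be violated. Then a sufficiently large $\mathrm{M}_{j}$ (whose existence follows from the boundedness of $\bbXi$ together with the box-type bounds on the first-stage variables in $\bbx$) allows the big-M constraint \eqref{eq:was2} to enforce the robust inequality whenever $w_{i}^{j}=0$ and to be vacuous otherwise, while the counting constraint $\sum_{j}w_{i}^{j}\leq S\epsilon_{i}$ in \eqref{eq:ben_a3} caps the fraction of permitted violations. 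Collecting \eqref{eq:LDR_b} (unchanged), \eqref{eq:was1}, and \eqref{eq:was2} yields precisely \eqref{eq:DRCC2}.

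The only delicate step is the decoupling into per-sample robust inequalities: one has to verify that the worst-case distribution is attained by a per-atom translation within the $\delta$-ball and that this can be done independently for each $j$ without loss of generality. This is exactly the content invoked from \cite[Cor.~4]{xie2019distributionally}, so I would cite that result once the decoupling claim has been stated, and then the remaining algebra is the same routine big-M bookkeeping used in the SAA case. Finally, I would note that as $\delta\to 0$ the $\delta\|\bba_{i}(\bbx)\|_{*}$ term vanishes and \eqref{eq:DRCC2} collapses to the SAA formulation \eqref{eq:ben_a}, providing a sanity check consistent with the discussion following \eqref{eq:Wasser2}.
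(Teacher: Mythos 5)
Your proposal is correct and follows essentially the same route as the paper: the deterministic constraints \eqref{eq:LDR_b} and the objective are carried over unchanged, and the distributionally robust chance constraints under $\mathcal{P}_2$ are replaced by the big-M system \eqref{eq:was1}--\eqref{eq:was2}, with the key per-sample decoupling and robustification step ultimately deferred to the cited result \cite[Cor.~4]{xie2019distributionally}, exactly as the paper does. The additional intuition you supply about the worst-case distribution translating each empirical atom within its $\delta$-ball is a helpful elaboration but does not change the structure of the argument.
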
 
For better numerical tractability, we have picked the $L_{\infty}$-norm as the ground metric for Wasserstein distance in \eqref{eq:was}, for which the dual norm is $L_1$ in \eqref{eq:was2} and the problem \eqref{eq:DRCC2} becomes an MILP.
Due to the similarity to SAA, the DRCC-OTS under the Wasserstein metric also incurs the same complexity issue as the number of constraints grows with sample size $|\ccalJ|$. Nonetheless, the choice of $\infty$-Wasserstein ambiguity set already improves the tractability over the traditional Wasserstein metric as in \cite{guo2018data,duan2018distributionally}. 
Compared to the mean-MAD ambiguity set,  the DRCC-OTS problem under the Wasserstein ambiguity set takes more computation time especially for large systems, but its data-driven feature makes the resulting solutions less conservative with sufficient number of data samples.

\begin{remark}[distributionally robust objective]
We can also extend the DRCC formulations to include a distributionally robust objective function. To achieve this, the term $\mathbb{E}[\bm{q}^{\mathsf T}\boldsymbol{\gamma} \mathbf{e}^{\mathsf T} \boldsymbol{\xi}]$ in the objective functions of \eqref{eq:DRCC1} and \eqref{eq:DRCC2} can be changed to $ \sup_{\mathbb{P} \in \mathcal{P}} \mathbb{E}[\bm{q}^{\mathsf T}\boldsymbol{\gamma} \mathbf{e}^{\mathsf T} \boldsymbol{\xi}]$. The latter is equivalent to $\bm{q}^{\mathsf T}\boldsymbol{\gamma} \sup_{\mathbb{P} \in \mathcal{P}} \mathbb{E}[\mathbf{e}^{\mathsf T} \boldsymbol{\xi}]$, in which $\sup_{\mathbb{P} \in \mathcal{P}}\mathbb{E}[\mathbf{e}^{\mathsf T} \boldsymbol{\xi}]$ can be determined for a given ambiguity set $\mathcal{P}$ similar to the steps for analyzing the constraints. Note that this change only affects  the coefficient for scaling the term  $\bm{q}^{\mathsf T}\boldsymbol{\gamma}$. Thus, for simplicity, this work did not incorporate a DR cost objective, as $\mathbb{E}[\mathbf{e}^{\mathsf T} \boldsymbol{\xi}]$ can be viewed as a lower bound for $\sup_{\mathbb{P} \in \mathcal{P}}\mathbb{E}[\mathbf{e}^{\mathsf T} \boldsymbol{\xi}]$.
\end{remark}

\section{Numerical Results}
\label{sec:num}

\begin{table*}[t] 
  \centering
  \caption{Performance of Approaches A1-A4 in the IEEE 14-bus System}
\begin{tabular}{ |P{2.4cm}||P{0.9cm}|P{1.3cm}|P{1.3cm}|P{1.3cm}|P{1.3cm}|P{1.3cm}|P{1.3cm}|P{1.3cm}|P{1.3cm}| }
 \hline
 & & \multicolumn{2}{c|}{A1 (CC-SAA)}& \multicolumn{2}{c|}{A2 (CC-Gaussian)}& \multicolumn{2}{c|}{A3 (DRCC-MAD)}& \multicolumn{2}{c|}{A4 (DRCC-Wasserstein)} \\
 \hline
  & $1-\epsilon$ & 0.95 & 0.90 & 0.95 & 0.90 & 0.95 & 0.90 & 0.95 & 0.90 \\
 \hline
\multirow{3}{8em}{Switching \\Decision} 
 & $L_{o} = 1$  & [16] & [16] & [16] & [16] & [16] & [16] & [16] & [16]\\
 & $L_{o} = 2$  & [9;18] & [9;18] & [9;18] & [9;20] & [9;20] & [9;20] & [9;20] & [9;20]\\
 & $L_{o} = 3$  & [9;18;19] & [9;18;19] & [9;19;20] & [9;12;20] & [9;12;18] & [9;12;18] & [9;12;18] & [9;12;18]\\
 \hline
 \multirow{3}{8em}{Run Time (sec)} 
 & $L_{o} = 1$  & 3.07 & 2.93 & 0.51 & 0.24 & 0.33 & 0.38 & 6.87 & 5.67\\
 & $L_{o} = 2$  & 1.73 & 2.29 & 0.25 & 0.43 & 0.30 & 0.38 & 11.52 & 9.05\\
 & $L_{o} = 3$  & 1.54 & 1.70 & 0.25 & 0.21 & 0.40 & 0.29 & 6.79 & 8.08\\
 \hline
 \multirow{3}{8em}{Out-of-sample Costs (\textdollar/$h$)}  
 & $L_{o} = 1$  & 545.34 & 542.10 & 546.01 & 544.48 & 556.02 & 552.90 & 549.95 & 547.02\\
 & $L_{o} = 2$  & 518.95 & 516.57 & 519.76 & 518.36 & 529.91 & 526.28 & 525.36 & 524.76\\
 & $L_{o} = 3$  & 515.89 & 512.50 & 518.70 & 517.49 & 524.77 & 521.37 & 518.55 & 517.81\\
 \hline
  \multirow{3}{8em}{Average Violation Rates} 
 & $L_{o} = 1$  & 0.0616 & 0.0896 & 0.0402 & 0.0412 & 0.0220 & 0.0232 & 0.0258 & 0.0278\\
 & $L_{o} = 2$  & 0.1054 & 0.1058 & 0.0472 & 0.0478 & 0.0238 & 0.0264 & 0.0294 & 0.0298\\
 & $L_{o} = 3$  & 0.1070 & 0.1184 & 0.0648 & 0.0676 & 0.0292 & 0.0304 & 0.0358 & 0.0364\\
 \hline
 \multirow{3}{8em}{Renewable Curtailment} 
 & $L_{o} = 1$  & 2.4957 & 2.5637 & 2.4327 & 2.4381 & 2.2253 & 2.3218 & 2.0611 & 2.1042\\
 & $L_{o} = 2$  & 2.6626 & 2.7561 & 2.4350 & 2.5124 & 2.2671 & 2.3062 & 1.9863 & 2.3505\\
 & $L_{o} = 3$  & 2.5136 & 2.5815 & 2.4156 & 2.4268 & 2.0670 & 2.1367 & 2.0272 & 2.2341\\
 \hline
\end{tabular}
  \label{tab:1}
\end{table*}

In this section, we present the numerical results validating the proposed DRCC-OTS  methods using the IEEE 14-bus and 118-bus test cases. Other benchmark approaches are implemented too for performance comparisons in terms of robustness. For ease of exposition, we refer to all the tested approaches  as the following:
\begin{itemize}
  \item A1: Sample-average approximation benchmark (MILP)
  \item A2: Gaussian approximation benchmark (MISOCP)
  \item A3: DRCC under mean-MAD ambiguity set (MILP)
  \item A4: DRCC under $\infty$-Wasserstein ambiguity set (MILP)
\end{itemize}

We use the hourly wind power data from the ERCOT market~\cite{ERCOT} from 2018 to 2020 by scaling it according to the size and load demand of the test systems. 
Due to the seasonality of wind patterns, its uncertainty may vary over the year. 
Thus, we have used data samples from all four seasons to build the ambiguity set and scenarios.
To better compare the performance, we conduct out-of-sample (OOS) experiments by partitioning the dataset into training and  testing samples. The optimal solutions are produced using the in-sample training dataset, while the costs and constraint violations are evaluated on the OOS testing dataset.
The 14-bus case is used to test and compare all the CC approaches (A1, A2, A3, and A4). 
Due to the tractability issues of the scenario-based approaches, for the larger 118-bus case we mainly evaluate (A2) and (A3), with a robust benchmark of $\epsilon=0$ (i.e., zero violation), all of which are scenario-free. Each individual CC tolerance level $\epsilon_{i}$ has been set to be the same value $\epsilon$ for simplicity.
The test case parameters are obtained from MATPOWER, and the OTS problems (MILP, MISOCP) are solved using Gurobi. The solver was set to utilize up to 12 available threads with a solution tolerance of $1\mathrm{e}^{-2}$. All the numerical tests have been implemented on a regular laptop equipped with Intel\textsuperscript{\textregistered} CPU @ 2.60 GHz and 16 GB of RAM using the MATLAB\textsuperscript{\textregistered} R2020b simulator.


\subsection{{IEEE} 14-Bus System Tests}
\label{sec:num_14}
The original IEEE 14-bus system consists of 20 lines and 5 conventional generators. We add 3 wind farms to the case, located at buses 3, 6, and 13, respectively.
Given that the marginal gain reduces with more lines to switch, we have used a maximum of $L_{o} = 3$ opening lines.
For the sample-based approaches (A1 and A4), increasing the sample size can lead to more accurate results at the cost of increased problem dimension and computation time. Therefore, we have used $S=200$ samples for both A1 and A4. 
The Wasserstein radius $\delta$ is selected according to \cite[Cor.~1]{bertsimas2021two} and tuned to comply with solutions from other approaches (A1, A2, and A3).
By setting $1-\epsilon = 0.95$ or $1-\epsilon = 0.90$, we compare the optimal switching decisions, run times, OOS costs and constraint violation rates for A1-A4. The results are listed in Table~\ref{tab:1}. To evaluate the OOS testing performance, we used 5,000 random samples from the actual wind data and recorded the percentage of violated constraints by averaging over all testing samples.
To avoid cases where the majority of line flow constraints are non-binding under uncertainty, we have slightly adjusted the line flow limits to increase the transmission congestion level as in \cite{zhang2016distributionally}.
 
The switching decisions tend to vary among the four approaches when $L_{o} = 2$ or 3. Interestingly, the switching decisions largely remain the same as the tolerance $\epsilon$ changes except for A2. Note that the tolerance $\epsilon$ more significantly affects the other decisions, namely the generation dispatch $\bm{g}$ and AGC coefficients $\bbgamma$. This becomes clear when comparing the OOS costs, as discussed shortly. By and large, the run times of all approaches are very reasonable. Sample-based approaches (A1 and A4) take more time, while the scenario-free ones (A2 and A3) are much faster (within 1 second). For the sample-based A4, the $\infty$-Wasserstein metric makes its run time comparable to A1, while offering better DRO guarantees. 

In terms of OOS performance, the DRCC approaches (A3 and A4) incur slightly higher total costs than the other two. This is expected as the DRCC approaches are designed to account for a variety of distributions in the ambiguity set. Between A3 and A4, the Wasserstein metric has lower OOS costs as its solutions are more data-driven and less conservative, as mentioned earlier. Note that although A3 and A4 produce exactly the same switching decisions, their OOS costs still differ due to their differences in the $\bm{g}$ and $\bbgamma$ decisions. This difference can also be observed for all approaches with $L_o=1$.  Under fixed $(1-\epsilon)$, the OOS costs generally are reduced as $L_{o}$ increases, and a smaller $(1-\epsilon)$ allows for more violations of constraints and thus lowers the total costs.

\begin{figure}[t!]
\centering
\vspace{-2pt}
\includegraphics[trim=0cm 0cm 2cm 0.5cm,clip=true,totalheight=0.32\textheight]{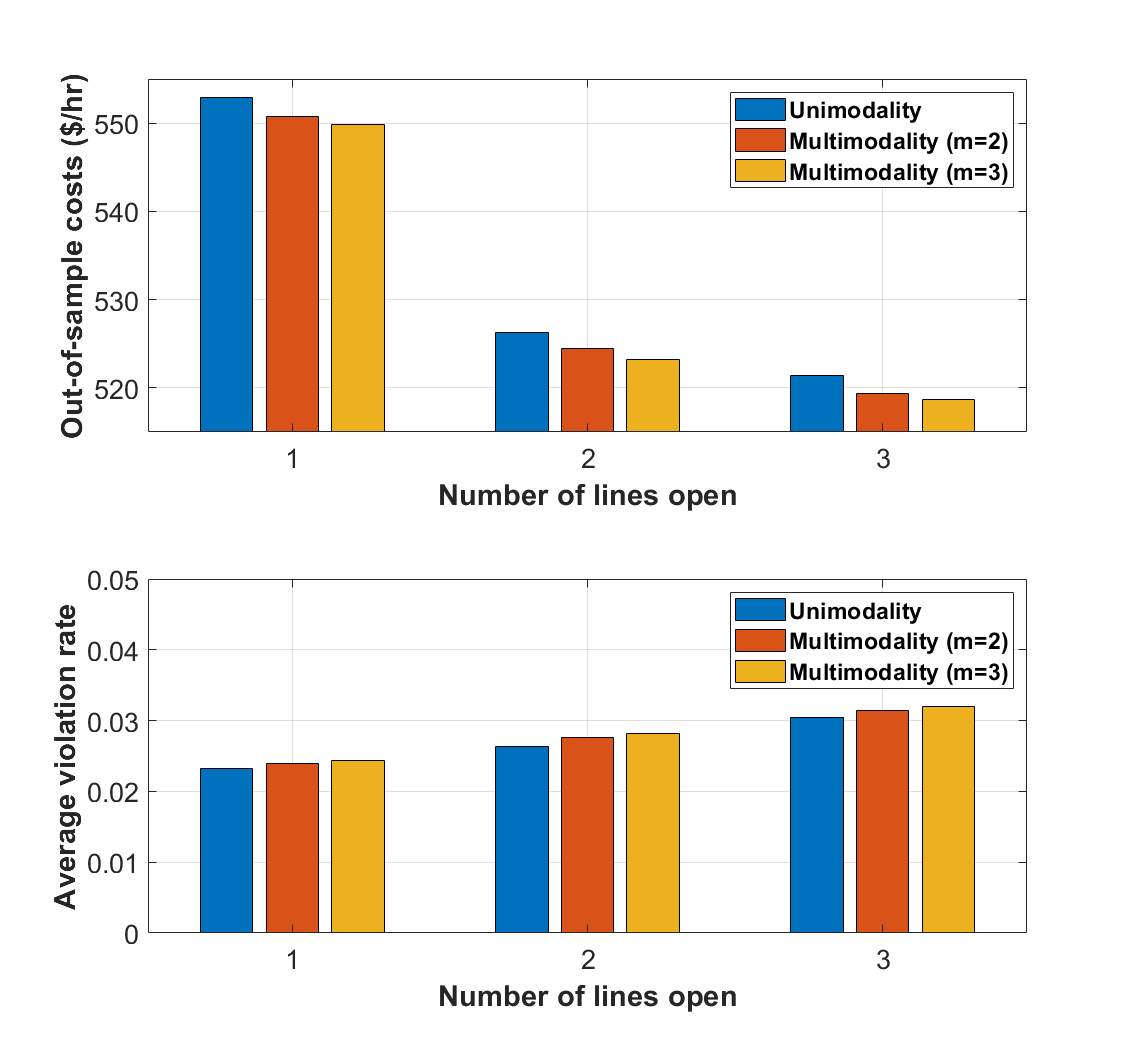}
\caption{Comparisons of OOS costs and average violation rate for unimodality and multimodality models in A3 approach.}
\label{fig:14_b}
\end{figure}

The comparisons on constraint violation and renewable curtailment in OOS testing are very important for evaluating the robustness performance.
Ideally, the OOS violation rates should not exceed the pre-specified threshold $\epsilon$. However, this is rarely the case for A1, because the SAA design relies on the approximation by the empirical distribution and cannot strictly enforce the robustness guarantees. In addition, A2 has one instance of exceeding $\epsilon=0.05$ for the case of $L_o =3$, which speaks to its disadvantage of solely relying on the assumption on Gaussian distributed uncertainty. Compared to A1 and A2, the proposed A3 and A4 have nicely maintained very low constraint violation rates for all choices of $L_o$, thanks to their DRCC based design principle. This is especially important for a smaller value of $\epsilon$, where the robustness guarantees are more difficult to enforce. 
Using the renewable curtailment quantification approach in Sec.~\ref{sec:quantify}, we have shown the clear improvement of DRCC approaches (A3 and A4) over CC approaches in reducing curtailment levels. As the former has demonstrated proved robustness guarantees, grid congestion is less likely to occur and so is the renewable curtailment. 
Fig.~\ref{fig:14_b} further shows the trade-off between OOS costs and average violation rates attained by the multimodality model based mean-MAD approaches with $1-\epsilon = 0.90$. Compared with the unimodality benchmark (A3), we increase the number of modes to be $m=2$ or 3. We observe that including the multimodality information leads to a less conservative DRCC solution with decreasing OOS costs. Meanwhile, the average violation rates slightly increase with $m$ as a trade-off.

In summary, the proposed DRCC approaches demonstrate a graceful trade-off between the total cost and constraint satisfaction rate. They can reliably limit the occurrence of constraint violations and thus reduce the level of renewable curtailment, at some incremental cost.

\begin{figure}[t!]
\centering
    \centering
    \subfloat[]
    {
        \includegraphics[trim=0cm 0cm 0cm 1cm,clip=true,totalheight=0.20\textheight]{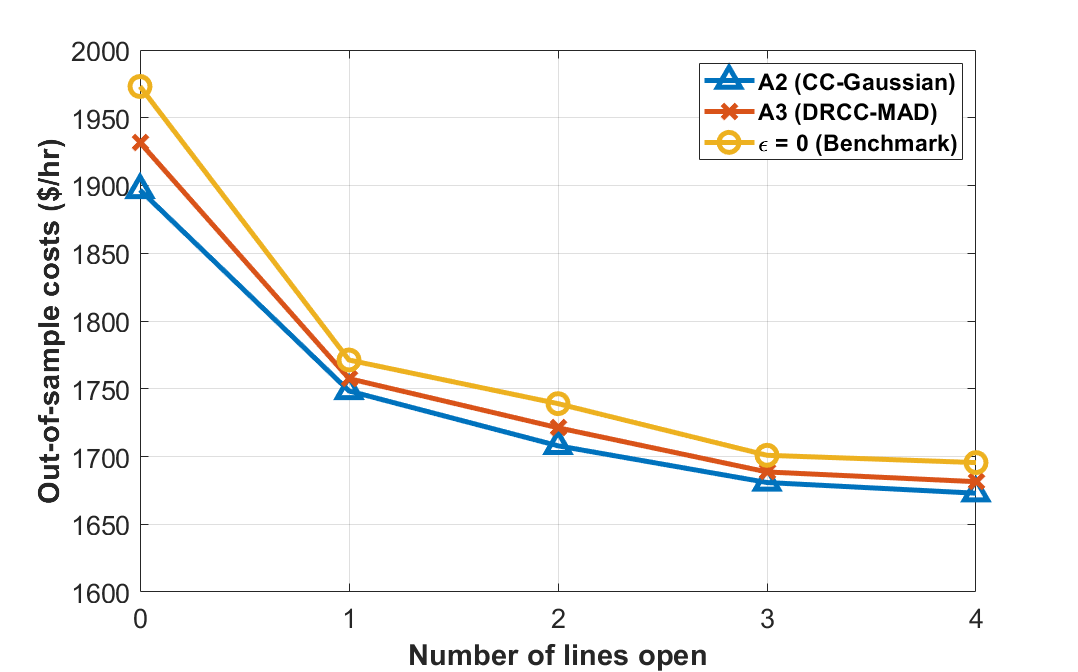}
     \label{fig:118_a}

    }
    \\
    \centering
    \subfloat[]
    {
        \includegraphics[trim=0cm 0cm 0cm 1cm,clip=true,totalheight=0.20\textheight]{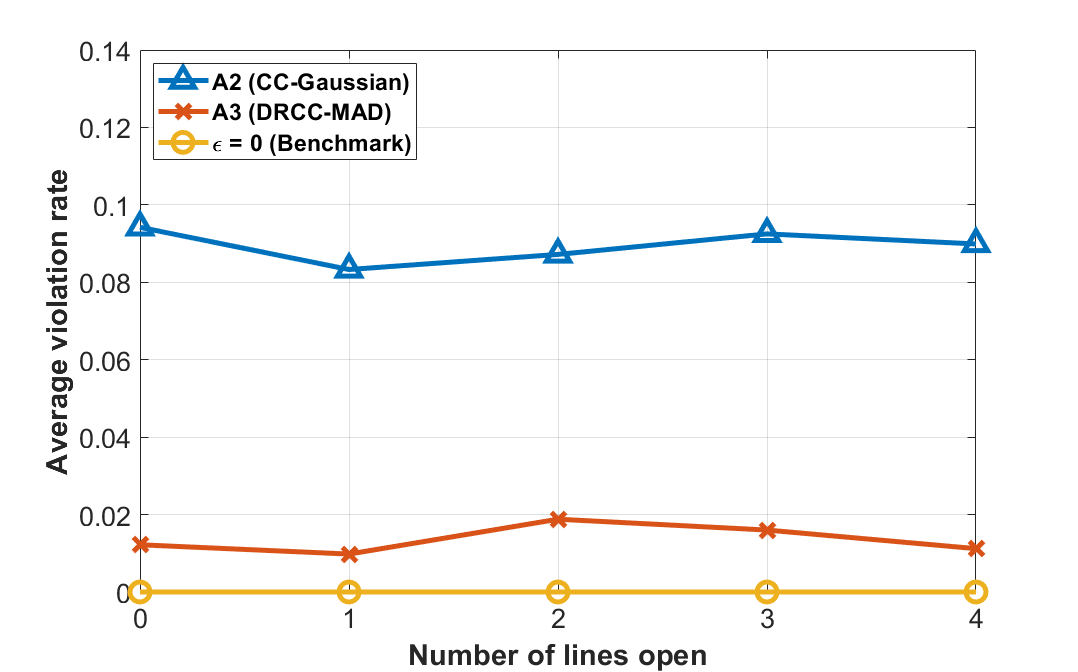}
        \label{fig:118_d}
    }
     \\
    \centering
    \subfloat[]
    {
        \includegraphics[trim=0cm 0cm 0cm 1cm,clip=true,totalheight=0.20\textheight]{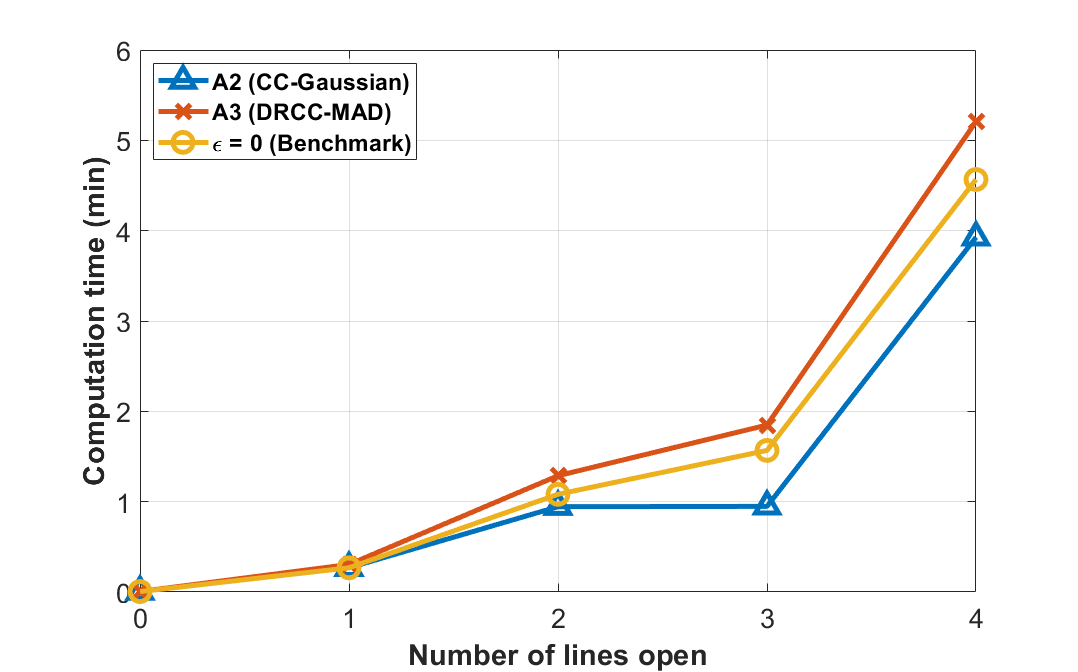}
        \label{fig:118_b}
    }
    \caption{Comparisons of the (a) OOS costs; (b) average violation rates; and (c) computation time for the 118-bus system.}
    \label{fig:sample_subfigures}
    \vspace*{-3.5mm}
\end{figure}

\subsection{{IEEE} 118-Bus System Tests}
\label{sec:num_118}
We have also tested the approaches on the IEEE 118-bus system, consisting of 186 lines and 19 conventional generators. Five wind farms have been added, which are located at buses 10, 23, 57, 62 and 86, respectively. Chance constraints have been applied on half of the line flow constraints with the other half strictly enforced. 
The sample-based methods (A1 and A4) are extremely inefficient for the mixed-integer formulation, especially for large systems (as high as 5-10 hours).
Due to this scalability issue, we have only compared the scenario-free approaches A2 and A3, with a robust benchmark by setting the tolerance level $\epsilon=0$.

First, we use the CC tolerance $1-\epsilon = 95\%$ to compare the OOS performance under different $L_o$, as plotted in Fig.~\ref{fig:118_a}. 
Overall, the OOS costs increase slightly from A2 to A3, and both are smaller than the benchmark cost. This trend is consistent with the average rate of constraint violations as shown in Fig.~\ref{fig:118_d}. 
Compared with the benchmark, A3 achieves 1.0\% cost reduction on average, while A2 achieves 1.8\%.
Notably, the constraint violation rates for the proposed A3 are nicely maintained around $0.02$ which is smaller than the threshold $\epsilon=0.05$, while those for A2 can go up to roughly $0.09$ that exceeds the tolerance level. This large-system test again confirms the aforementioned improvement of the proposed DRCC-OTS approaches over A2 in terms of guaranteed constraint satisfaction. 
Moreover, we have compared the average run time, as shown in Fig.~\ref{fig:118_b}. In general, the run time of the proposed A3 is on par with the other two, with a moderate increase for larger $L_{o}$ values.  
Lastly, we compare the OOS costs of the proposed A3 for different tolerance levels, by varying $\epsilon$ in the range of $0\%-30\%$, as shown in Fig.~\ref{fig:118_c}.
With fixed $L_{o}$, a larger $\epsilon$ value leads to gradually decreasing costs, by allowing higher occurrences of constraint violations. If we compare to the OOS costs of the benchmark approach ($\epsilon = 0$), the proposed DRCC-based A3 can  attain lower costs with a roughly $1.1\%$ reduction on average. Notice that the marginal gain of cost reduction is minimal at higher tolerance levels ($\epsilon$ increasing from 20\% to 30\%).  Generally speaking, the range of $\left[5\%, 20\%\right]$ is deemed appropriate for $\epsilon$ in practical operations  \cite{zhang2016distributionally,duan2018distributionally}.






\begin{figure}[t!]
\centering
\vspace{-2pt}
\includegraphics[trim=0cm 0cm 1cm 0cm,clip=true,totalheight=0.22\textheight]{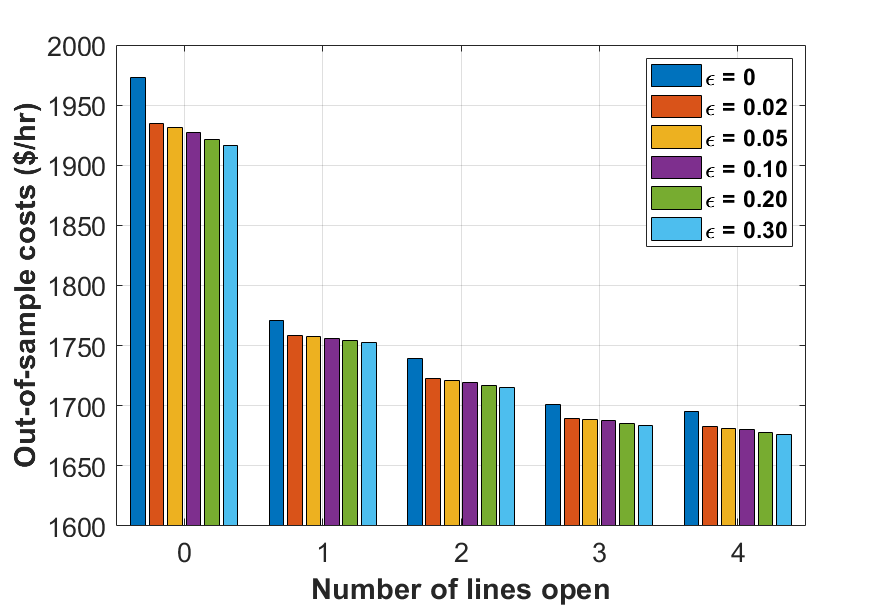}
\caption{Comparisons of the OOS costs attained by A3 under different tolerance levels and $L_o$ values.}
\label{fig:118_c}
\end{figure}

In summary, the proposed DRCC approaches can effectively limit the occurrences of violating line flow constraints by accounting for the distributional ambiguity of  uncertainty.  In particular, the mean-MAD ambiguity criterion leads to a scenario-free, tractable MILP reformulation, with comparable complexity to the CC and benchmark approaches. 

\section{Conclusions} \label{sec:con}

This paper considered the chance-constrained optimal transmission switching (CC-OTS) problem to account for renewable uncertainty in power systems. We proposed to simplify the two-stage OTS problem by establishing the equivalence of linear decision rules (LDR) based reformulation. Due to the lack of distributional knowledge on the uncertainty, we pursued a distributionally robust chance-constrained (DRCC) OTS paradigm that can ensure the guarantees over an ambiguity set of uncertainty distributions. Both moment-based and distance-based ambiguity sets have been considered, leading to scalable MILP problems through dualization. Numerical tests validated  the performance improvements  of the proposed DRCC approaches over the CC alternatives in terms of guaranteed constraint violation rates.  Between the two proposed DRCC-OTS approaches, the one using the mean-MAD ambiguity set brought lower computation complexity on par with other scenario-free approaches, while the one using the Wasserstein ambiguity led to less conservative solutions by adapting to the actual data samples. Future work includes reducing the complexity of scenario-based DRCC-OTS by simplifying the Wasserstein ambiguity set and developing
machine learning approaches for accelerated OTS computations in real-time.


%

\bibliography{bibliography}

\appendix
Here we will present the detailed steps to derive the equivalent reformulation from \eqref{eq:ind} to \eqref{eq:L} for the problem \eqref{eq:DRCC1}. 
Using the definition of the indicator function ${\mathbbm{1}}(\cdot)$, the semi-infinite constraint in problem \eqref{eq:ind} boils down to the following two cases:
\begin{subequations} \label{eq:ind2}
\begin{align}
  & \bbalpha + \bbbeta^{\mathsf T} \bbxi - \bbkappa^{\mathsf T} |\bbxi - \bbmu| \leq 1, \quad \forall \bbxi \label{eq:ind2a}\\
  & \bbalpha + \bbbeta^{\mathsf T} \bbxi - \bbkappa^{\mathsf T} |\bbxi - \bbmu| \leq 0, \quad \forall \bbxi: \mathbf{a}_{i} (\mathbf{x})^{\mathsf T}\boldsymbol{\xi} > {b}_{i}(\mathbf{x}) \label{eq:ind2b}
\end{align}
\end{subequations}
Specifically, the right hand side equals to 0 for any $\bbxi$ such that $\mathbf{a}_{i} (\mathbf{x})^{\mathsf T}\boldsymbol{\xi} > {b}_{i}(\mathbf{x})$, or 1 for any other choice of $\bbxi$. These two cases can be reformulated using standard convex duality theory \cite{hanasusanto2015distributionally}.
Specifically, \eqref{eq:ind2a} is equivalent to the following:
\begin{subequations} \label{eq:ind3}
\begin{align}
\sup \quad & \bbalpha + \bbbeta^{\mathsf T} \bbxi - \bbkappa^{\mathsf T} \bbrho \leq 1 \label{eq:ind3_a}\\
\textrm{s.t.} \quad & \bbxi \in \mathbb{R}^{K}   \label{eq:ind3_b}\\
  & \bbrho \geq \bbxi - \bbmu \label{eq:ind3_d} \quad \quad \qquad \; \; \; \; \; ({\bbpi_1})\\
  & \bbrho \geq \bbmu - \bbxi \label{eq:ind3_e} \quad \quad \qquad \; \; \; \; \; ({\bbtau_1})
\end{align}
\end{subequations}
Dualizing this optimization problem implies that there exists non-negative dual variables $\bbpi_1 \in \mathbb{R}_{+}^{K}, \bbtau_1 \in \mathbb{R}_{+}^{K}$ such that 
\begin{align}
\bbalpha - \bbkappa^{\mathsf T}\bbrho + \bbpi_{1}^{\mathsf T}(\bbrho + \bbmu)& + \bbtau_{1}^{\mathsf T}(\bbrho - \bbmu) - 1 \nonumber\\
& \leq \min_{\mathbf{U}\boldsymbol{\xi} \leq \mathbf{t}} \;  (-\bbbeta^{\mathsf T} + \bbpi_{1}^{\mathsf T} - \bbtau_{1}^{\mathsf T})\bbxi
\end{align}
We can dualize the right hand side again using the uncertainty support, and it leads to the following equivalent constraints:
\begin{subequations}
\begin{align}
  & \alpha + ({\bbpi}_{1}^{\mathsf T} - {\bbtau}_{1}^{\mathsf T})\bbmu + {\bbpsi}_{1}^{\mathsf T} \mathbf{t}  \leq 1 \\
  & {\bbbeta}^{\mathsf T} + {\bbtau}_{1}^{\mathsf T} = {\bbpi}_{1}^{\mathsf T} + {\bbpsi}_{1}^{\mathsf T} \mathbf{U}  \\
  & {\bbpi}_{1}^{\mathsf T} + {\bbtau}_{1}^{\mathsf T} = {\bbkappa}^{\mathsf T}
\end{align}
\end{subequations}
where $\bbpsi_{1} \in \mathbb{R}_{+}^{W}$ are introduced as the dual variables for the linear constraints ($\mathbf{U}\boldsymbol{\xi} \leq \mathbf{t}$) for the support set
$\bbXi$.
Similarly, we can derive the equivalent constraints for \eqref{eq:ind2b}. The constraints \eqref{eq:ind2b} are equivalent to the following:
\begin{subequations} \label{eq:ind5}
\begin{align}
\sup \quad & \bbalpha + \bbbeta^{\mathsf T} \bbxi - \bbkappa^{\mathsf T} \bbrho \leq 0 \label{eq:ind5_a}\\
\textrm{s.t.} \quad & \bbxi \in \mathbb{R}^{K}   \label{eq:ind5_b}\\
  & \mathbf{a}_{i} (\mathbf{x})^{\mathsf T}\boldsymbol{\xi} > {b}_{i}(\mathbf{x}) \label{eq:ind5_c} \quad \qquad  \:  ({\lambda})\\
  & \bbrho \geq \bbxi - \bbmu \label{eq:ind5_d} \quad \quad \qquad \; \; \; \; \; ({\bbpi_2})\\
  & \bbrho \geq \bbmu - \bbxi \label{eq:ind5_e} \quad \quad \qquad \; \; \; \; \; ({\bbtau_2})
\end{align}
\end{subequations}
Dualizing it implies that there exists non-negative dual variables $\lambda \in \mathbb{R}_{+}, \bbpi_2 \in \mathbb{R}_{+}^{K}, \bbtau_2 \in \mathbb{R}_{+}^{K}$ such that 
\begin{align}
\bbalpha - \bbkappa^{\mathsf T}\bbrho + & \bbpi_{2}^{\mathsf T}(\bbrho + \bbmu) +  \bbtau_{2}^{\mathsf T}(\bbrho - \bbmu) - \lambda b_{i}(\mathbf x)
\nonumber\\
&\leq \min_{\mathbf{U}\bbxi \leq \mathbf{t}} \;  (-\bbbeta^{\mathsf T} + \bbpi_{2}^{\mathsf T} - \bbtau_{2}^{\mathsf T} - \lambda \mathbf{a}_{i} (\mathbf{x})^{\mathsf T})\bbxi
\end{align}
We can dualize the right hand size again, which leads to the following equivalent constraints:
\begin{subequations}
\begin{align}
  & \alpha + ({\bbpi}_{2}^{\mathsf T} - {\bbtau}_{2}^{\mathsf T})\bbmu + {\bbpsi}_{2}^{\mathsf T} \mathbf{t}  \leq \lambda b_{i}(\mathbf x) \\
  & {\bbbeta}^{\mathsf T} + \lambda \mathbf{a}_{i} (\mathbf{x})^{\mathsf T} + {\bbtau}_{2}^{\mathsf T} = {\bbpi}_{2}^{\mathsf T} + {\bbpsi}_{2}^{\mathsf T} \mathbf{U}  \\
  & {\bbpi}_{2}^{\mathsf T} + {\bbtau}_{2}^{\mathsf T} = {\bbkappa}^{\mathsf T}
\end{align}
\end{subequations}
where $\bbpsi_{2} \in \mathbb{R}_{+}^{W}$ are introduced as the dual variables for the linear constraints $\mathbf{U}\boldsymbol{\xi} \leq \mathbf{t}$.
Recall that the objective function \eqref{eq:ind_a} also needs to satisfy:
\begin{align}
    \alpha + \bbbeta^{\mathsf T} \bbmu - \bbkappa^{\mathsf T} \boldsymbol{\sigma} \geq 1- \epsilon_{i} \label{eq:obj}
\end{align}
Therefore, the original problem is equivalent to the following constraints:
\begin{subequations}
\begin{align}
  & \alpha + \bbbeta^{\mathsf T} \bbmu - \bbkappa^{\mathsf T} \boldsymbol{\sigma} \geq 1- \epsilon_{i}\\
  & \alpha + ({\bbpi}_{1}^{\mathsf T} - {\bbtau}_{1}^{\mathsf T})\bbmu + {\bbpsi}_{1}^{\mathsf T} \mathbf{t}  \leq 1 \\
  & {\bbbeta}^{\mathsf T} + {\bbtau}_{1}^{\mathsf T} = {\bbpi}_{1}^{\mathsf T} + {\bbpsi}_{1}^{\mathsf T} \mathbf{U}  \\
  & {\bbpi}_{1}^{\mathsf T} + {\bbtau}_{1}^{\mathsf T} = {\bbkappa}^{\mathsf T}\\
  & \alpha + ({\bbpi}_{2}^{\mathsf T} - {\bbtau}_{2}^{\mathsf T})\bbmu + {\bbpsi}_{2}^{\mathsf T} \mathbf{t}  \leq \lambda b_{i}(\mathbf x) \label{eq:bi1}\\
  & {\bbbeta}^{\mathsf T} + \lambda \mathbf{a}_{i} (\mathbf{x})^{\mathsf T} + {\bbtau}_{2}^{\mathsf T} = {\bbpi}_{2}^{\mathsf T} + {\bbpsi}_{2}^{\mathsf T} \mathbf{U}  \label{eq:bi2}\\
  & {\bbpi}_{2}^{\mathsf T} + {\bbtau}_{2}^{\mathsf T} = {\bbkappa}^{\mathsf T}
\end{align}
\end{subequations}
Notice that the dual variable $\lambda > 0$ corresponding to the constraint $\mathbf{a}_{i} (\mathbf{x})^{\mathsf T}\boldsymbol{\xi} > {b}_{i}(\mathbf{x})$ introduces bilinearity in the above formulation, due to $\lambda b_{i}(\mathbf x)$ in constraint \eqref{eq:bi1} and $\lambda \mathbf{a}_{i} (\mathbf{x})^{\mathsf T}$ in constraint \eqref{eq:bi2}. To address this, we divide all the constraints with $\lambda$ and redefine variables $\alpha' = \frac{\alpha}{\lambda} \in \mathbb{R}$, $\bbbeta' = \frac{\bbbeta}{\lambda} \in \mathbb{R}^{K}$, $\bbkappa' = \frac{\bbkappa}{\lambda} \in \mathbb{R}^{K}_{+}$, $\bbpi' = \frac{\bbpi}{\lambda} \in \mathbb{R}^{K}_{+}$, $\bbtau' = \frac{\bbtau}{\lambda} \in \mathbb{R}^{K}_{+}$, $\lambda' = \frac{1}{\lambda} \in \mathbb{R}_{+}$. Eventually, we arrive at the following equivalent \textit{linear} constraints, as in \eqref{eq:L}:
\begin{subequations}
\begin{align}
  & \alpha' + \bbbeta'^{\mathsf T} \bbmu - \bbkappa'^{\mathsf T} \boldsymbol{\sigma} \geq (1-\epsilon_{i})\lambda' \\
  & \alpha' + ({\bbpi'}_{1}^{\mathsf T} - {\bbtau'}_{1}^{\mathsf T})\bbmu + {\bbpsi'}_{1}^{\mathsf T} \mathbf{t}  \leq \lambda' \\
  & {\bbbeta'}^{\mathsf T} + {\bbtau'}_{1}^{\mathsf T} = {\bbpi'}_{1}^{\mathsf T} + {\bbpsi'}_{1}^{\mathsf T} \mathbf{U}  \\
  & {\bbpi'}_{1}^{\mathsf T} + {\bbtau'}_{1}^{\mathsf T} = {\bbkappa'}^{\mathsf T}\\
  & \alpha' + ({\bbpi'}_{2}^{\mathsf T} - {\bbtau'}_{2}^{\mathsf T})\bbmu  + {\bbpsi'}_{2}^{\mathsf T} \mathbf{t} \leq {b}_{i}(\mathbf{x})\\
  & {\bbbeta'}^{\mathsf T} + \mathbf{a}_{i} (\mathbf{x})^{\mathsf T} + {\bbtau'}_{2}^{\mathsf T} = {\bbpi'}_{2}^{\mathsf T}  + {\bbpsi'}_{2}^{\mathsf T} \mathbf{U}\\
  & {\bbpi'}_{2}^{\mathsf T} + {\bbtau'}_{2}^{\mathsf T} = {\bbkappa'}^{\mathsf T}.
\end{align}
\end{subequations}





\bibliographystyle{IEEEtran}

\itemsep2pt


\begin{IEEEbiography}[{\includegraphics[width=1in,
height=1.25in,keepaspectratio]{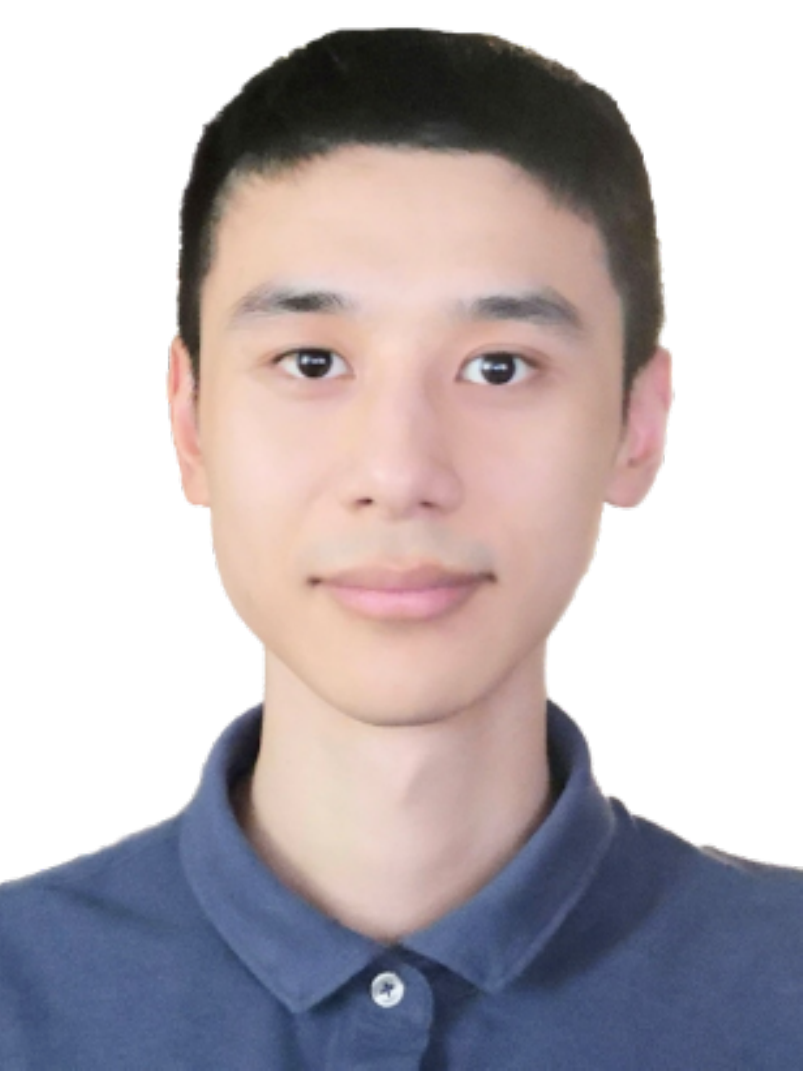}}]{Yuqi Zhou} (S'16) received the B.E. degree in information engineering from Xi'an Jiaotong University, Xi'an, China, in 2015. He received the M.S. degree in electrical engineering from Texas A\&M University, College Station, TX, USA, in 2018.
He is currently working toward the Ph.D. degree at the University of Texas at Austin, Austin, TX, USA.
His current research interests include topology control and optimization in high-voltage transmission systems, and power system operations under uncertainty.
\end{IEEEbiography}

\begin{IEEEbiography}[{\includegraphics[width=1in,
		height=1.25in,keepaspectratio]{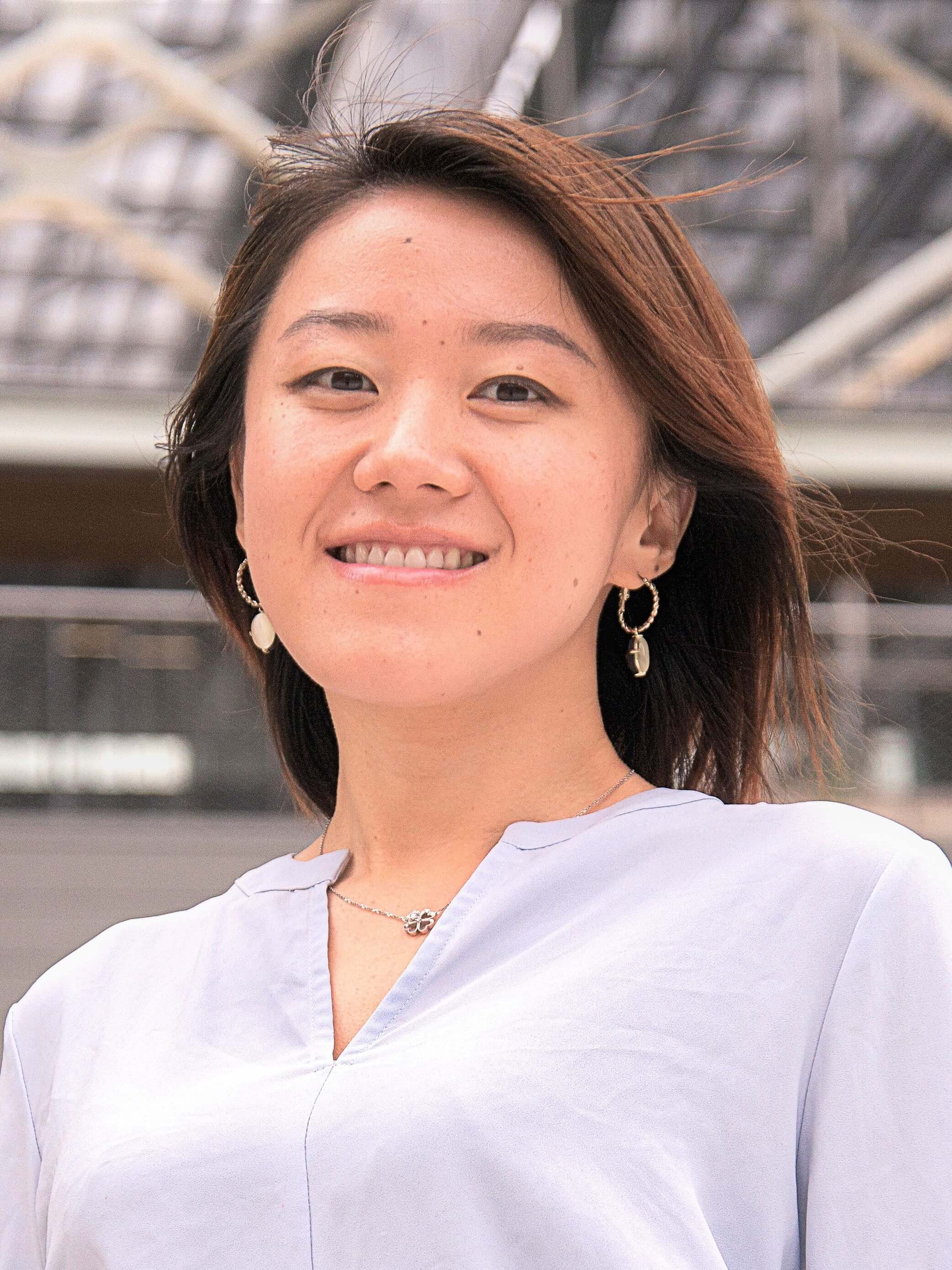}}]{Hao Zhu} (M'12--SM'19) is an Associate Professor of Electrical and Computer Engineering (ECE) at The University of Texas at Austin. She received the B.S. degree from Tsinghua University in 2006, and the M.Sc. and Ph.D. degrees from the University of Minnesota in 2009 and 2012. From 2012 to 2017, she was a Postdoctoral Research Associate and then an Assistant Professor of ECE at the University of Illinois at Urbana-Champaign. Her research focus is on developing algorithmic solutions for problems related to learning and optimization for future energy systems. Her current interest includes physics-aware and risk-aware machine learning for power system operations, and energy management system design under the cyber-physical coupling. She is a recipient of the NSF CAREER Award and an invited attendee to the US NAE Frontier of Engr.~(USFOE) Symposium, and also the faculty advisor for three Best Student Papers awarded at the North American Power Symposium. She is currently an Editor of \textit{IEEE Trans.~on Smart Grid} and \textit{IEEE Trans.~on Signal Processing}. 
\end{IEEEbiography}

\begin{IEEEbiography}[{\includegraphics[width=1in,
height=1.25in,keepaspectratio]{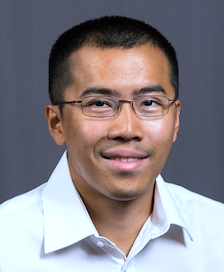}}]{Grani A. Hanasusanto} is  an Assistant Professor of Operations Research and Industrial Engineering at The University of Texas at Austin. He received an M.Sc. degree in Financial Engineering from the National University of Singapore and a Ph.D. degree in Operations Research from Imperial College London. Before joining UT Austin, he was a postdoctoral researcher at the College of Management of Technology at Ecole Polytechnique Federale de Lausanne. His research focuses on the design and analysis of tractable solution schemes for decision-making problems under uncertainty, with applications in operations management, energy systems, machine learning, and data analytics.
\end{IEEEbiography}

\end{document}